\theoremstyle{definition} 
\newtheorem{theorem}{Theorem}[section]
\newtheorem{corollary}{Corollary}[section]
\newtheorem{lemma}{Lemma}[section]
\newtheorem{proposition}{Proposition}[section]
\newtheorem{definition}{Definition}[section]
\newtheorem{example}{Example}[section]
\newtheorem{remark}{Remark}[section]
\newcommand{\R}{\mathbb{R}}
\newcommand{\M}{\mathbb{M}}
\title{Moduli dimensions of lattice polygons}
\author{Marino Echavarria, Max Everett, Shinyu Huang, Liza Jacoby, Ralph Morrison,\\Ayush Kumar Tewari, Raluca Vlad, and Ben Weber}
\date{}
\begin{document}

\maketitle

\begin{abstract}
Given a lattice polygon $P$ with $g$ interior lattice points, we associate to it the moduli space of tropical curves of genus $g$ with Newton polygon $P$.  We completely classify the possible dimensions such a moduli space can have.  For non-hyperelliptic polygons the dimension must be between $g$ and $2g+1$, and can take on any integer value in this range, with exceptions only in the cases of genus $3$, $4$, and $7$.  We provide a similar result for hyperelliptic polygons, for which the range of dimensions is from $g$ to $2g-1$.  In the case of non-hyperelliptic polygons, our results also hold for the moduli space of algebraic curves that are non-degenerate with respect to $P$.
\end{abstract}

\section{Introduction}

Tropical geometry studies combinatorial, piece-wise linear analogs of objects from algebraic geometry.  The main objects of study in the two-dimensional case are \textit{tropical plane curves}, which are the tropical versions of algebraic plane curves. A tropical plane curve is a weighted and balanced polyhedral complex in the real plane $\R^2$, pure of dimension $1$. Each tropical plane curve has an associated \textit{lattice polygon}, which is a convex polygon with integer lattice points, called its \emph{Newton polygon}.  The tropical curve is dual to a \textit{subdivision} of this polygon into subpolygons.  Under this duality the vertices of the tropical curve are in bijection with the faces in the subdivision; bounded edges between vertices in the curve correspond to and are perpendicular to interior edges in the subdivision that separate the faces associated to the vertices; and unbounded edges correspond to and are perpendicular to exterior edges of the Newton polygon.  A subdivided lattice polygon and a dual tropical curve are illustrated in Figure \ref{fig:newton-polygon+curve}. Note that while each tropical curve has a unique associated subdivision, the same subdivison can be dual to multiple curves: certainly translation and scaling does not intefere with duality, and usually there is some freedom beyond this in choosing the edge lengths of a tropical curve with prescribed subdivision. 

\begin{figure}[hbt]
    \centering 
    \includegraphics[scale = .4]{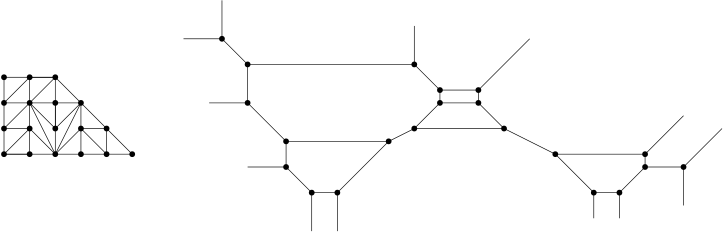}
    \caption{A unimodular triangulation of a lattice polygon, and a smooth tropical curve dual to it.}
    \label{fig:newton-polygon+curve}
\end{figure}

We say tropical curve is \textit{smooth} if its dual subdivision is a unimodular triangulation of its dual Newton polygon -- that is, if the polygon is subdivided into triangles of area $1/2$. In this paper, we will restrict our attention to such smooth tropical plane curves and the triangulations that are dual to them.

The \textit{genus} of a lattice polygon $P$ is the number of lattice points in the interior of $P$. We similarly define the {genus} of a smooth tropical plane curve to be its first Betti number, or equivalently the number of bounded faces; note that the genus of a polygon is the same as the genus of a dual smooth tropical curve.   We say that a polygon is \textit{hyperelliptic} if all its interior points are collinear, and \textit{non-hyperelliptic} otherwise. 

To each tropical plane curve whose dual Newton polygon has genus at least $2$, we associate a metric graph called its \textit{skeleton}. We obtain it by removing any unbounded rays and any leaves, until every vertex has degree at least \(2\); from there we ``smooth over'' the resulting graph, deleting each vertex with degree $2$ and replace its two edges with a single edge.  This skeletonization process is illustrated in Figure \ref{fig:skeleton-example}. Such a skeleton becomes a metric graph after we associate to each of its edge a length equal to the sum of lengths of all initial edges from the tropical curve contributing to it.  We similarly define the genus of the skeleton to be the first Betti number of the graph, or equivalently the number of bounded faces in any planar embedding; this is equal to the genus of any tropical curve giving rise to the skeleton.

\begin{figure}[hbt]
    \centering
    \includegraphics[scale = .25]{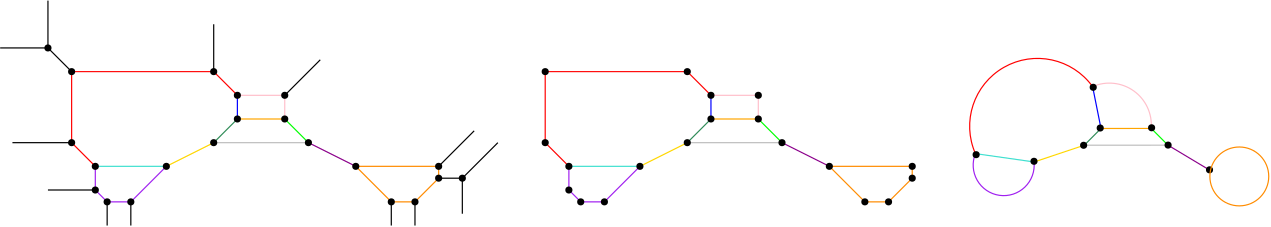}
    \caption{A tropical curve and its genus $5$ skeleton obtained by deleting the ``loose'' ends and then ``smoothing over'' the graph}
    \label{fig:skeleton-example}
\end{figure}

Let $\mathcal{T}$ be a regular unimodular triangulation of a lattice polygon $P$ of genus $g\geq 2$.  Following \cite{bjms}, we define $\M_\mathcal{T}$ to be the closure of the set of all metric graphs arising as the skeleton of some smooth tropical plane curve dual to $\mathcal{T}$. 
We similarly define $\M_P$ to be the closure of the set of all metric graphs arising as the skeleton of some smooth tropical plane curve with Newton polygon $P$. Note that $$\M_{P} = \bigcup_\mathcal{T} \; \M_{\mathcal{T}},$$
where the union is taken over all regular unimodular triangulations $\mathcal{T}$ of $P$.  As discussed in \cite[\S 2]{bjms}, the space $\M_\mathcal{T}$ can be identified with a polyhedral cone in $\mathbb{R}^{3g-3}$ and thus has a well-defined dimension. Intuitively, this is the maximum number of edge lengths we can choose in a skeleton arising from $\mathcal{T}$ before all other lengths are determined.  As a finite union of polyhedral cones, $\M_P$ has dimension equal to the maximum dimension of any $\M_\mathcal{T}$ where $\mathcal{T}$ is a unimodular triangulation of $P$:
$$\dim(\M_P) = \max_\mathcal{T} \; \dim(\M_{\mathcal{T}}).$$
We define the \textit{moduli dimension} of $\mathcal{T}$ to be $\dim(\mathbb{M}_\mathcal{T})$, and the \emph{moduli dimension} of $P$ to be $\dim(\mathbb{M}_P)$.

In addition to introducing the moduli spaces $\M_\mathcal{T}$ and $\M_P$, the work in \cite{bjms} determines the maximum possible moduli dimension of a lattice polygon $P$ of genus $g$:  for a non-hyperelliptic polygon, it is $6$ for $g=3$, $16$ for $g=7$, and $2g+1$ otherwise; and for a hyperelliptic polygon it is $2g-1$.  The fact that these values are upper bounds on moduli dimension follows from work in \cite{cv}, who associate to a lattice polygon $P$ the moduli space $\mathcal{M}_P$ of all algebraic curves that are non-degenerate with Newton polygon $P$.  They determine the maximum possible dimension of such an algebraic space, and properties of tropical geometry imply that $\dim(\mathbb{M}_P)\leq \dim(\mathcal{M}_P)$ \cite[\S 3]{bjms}; thus the same upper bounds hold in the tropical world, and are known to be achieved by the so-called \emph{honeycomb polygons} \cite[\S 4]{bjms}.  It was further shown in \cite{small2017dimensions} that $\dim(\M_P)=\dim(\mathcal{M}_P)$ for any non-hyperelliptic polygon $P$.

Our main contribution in this paper is to determine all possible values of the moduli dimension of a lattice polygon $P$ of genus $g$.  We split into two cases: where $P$ is non-hyperelliptic, and where $P$ is hyperelliptic.

\begin{theorem}\label{theorem:non-hyperelliptic}
There exists a non-hyperelliptic polygon $P$ of genus $g\geq 3$ with $\dim(\mathbb{M}_P)=d$ if and only if
\[l(g)\leq d\leq u(g),\]
where
\[l(g)=\begin{cases}g+1 &\textrm{ if $g\in \{4,7\}$}\\ g&\textrm{ otherwise} \end{cases}\]
and
\[u(g)=\begin{cases}2g &\textrm{ if $g=3$}\\2g+2 &\textrm{ if $g=7$}\\ 2g+1&\textrm{ otherwise.} \end{cases}\]
\end{theorem}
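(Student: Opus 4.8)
The plan is to split the biconditional into an inequality part (every realizable dimension $d$ satisfies $l(g)\le d\le u(g)$) and a realizability part (every integer in $[l(g),u(g)]$ occurs). The upper bound $d\le u(g)$ requires essentially no new work: as recorded above, \cite{cv} and \cite{bjms} show that the maximum moduli dimension of a non-hyperelliptic genus-$g$ polygon is $6$ for $g=3$, $16$ for $g=7$, and $2g+1$ otherwise, and these are exactly the values $u(g)$. For the lower bound I would first establish the universal floor $\dim(\M_P)\ge g$, which I expect to follow from a cycle-counting argument on the skeleton: the $g$ independent loops of a genus-$g$ skeleton can be assigned independently varying lengths in a suitably chosen regular unimodular triangulation, using the duality between bounded edges of the tropical curve and interior edges of the triangulation. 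This floor is tight for generic $g$ (matching $l(g)=g$), so the substantive content of the lower bound is the exceptional claim that the value $g$ is \emph{not} attained by non-hyperelliptic polygons when $g\in\{4,7\}$.

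To prove these exceptional lower bounds $l(4)=5$ and $l(7)=8$, I would use the classical fact that there are only finitely many lattice polygons of each fixed genus up to unimodular equivalence. For $g=4$ and $g=7$ this lets me enumerate all non-hyperelliptic candidates $P$ and compute $\dim(\M_P)$ for each — via $\dim(\M_P)=\dim(\mathcal{M}_P)$ from \cite{small2017dimensions} — verifying directly that $g$ never occurs while $g+1$ does. The non-hyperellipticity hypothesis enters here precisely because it forces the interior hull $P^{(1)}$ to be two-dimensional, which rules out the thin, collinear configurations that would otherwise drive the dimension down to $g$.

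For realizability I would construct, for each target $d\in[l(g),u(g)]$, an explicit non-hyperelliptic polygon $P$ with $\dim(\M_P)=d$, computing the dimension as a count of free edge lengths in an optimal triangulation. The cleanest route is to anchor the endpoints — the honeycomb polygons of \cite{bjms} realize $u(g)$, and a separate ``thin'' family near the hyperelliptic boundary realizes $l(g)$ — and then to interpolate by a local surgery on the Newton polygon that fixes the number of interior points (hence the genus $g$) while decreasing the moduli dimension by exactly one at each step. Iterating such a surgery from a maximal polygon down to a minimal one would sweep out every intermediate integer value, and at each stage I would check that the interior points remain non-collinear so that $P$ stays non-hyperelliptic.

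I expect the main obstacle to be controlling the moduli dimension \emph{exactly} rather than merely bounding it, because $\dim(\M_P)=\max_\mathcal{T}\dim(\M_\mathcal{T})$ is a maximum over all triangulations: a small change to $P$ can change which triangulation is optimal, so for each constructed $P$ I must simultaneously exhibit a triangulation attaining the target dimension and argue that no triangulation exceeds it (the latter controlled by the $\dim(\mathcal{M}_P)$ bound of \cite{cv}). The second difficulty is that the three exceptional genera $g\in\{3,4,7\}$ disrupt the generic pattern at one or both endpoints — the top of the range for $g=3$ and $g=7$, and the bottom for $g=4$ and $g=7$ — so the endpoint constructions and the matching non-existence arguments will have to be handled case by case near these genera rather than deduced from the uniform argument.
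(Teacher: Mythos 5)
Your overall architecture matches the paper's (upper bound quoted from \cite{cv} and \cite{bjms}; intermediate values by deleting lattice points one at a time, which is exactly the paper's device), but there are genuine gaps at the bottom of the range. First, your floor $\dim(\mathbb{M}_P)\geq g$ rests on the heuristic that the $g$ loop lengths of the skeleton can be varied independently in a suitable triangulation; this is not a proof, and it is precisely the content of the formula $\dim(\mathbb{M}_\mathcal{T})=g-3+b_2+2b_3$ that loop lengths are \emph{not} free in general. The paper has to run a radial-edge counting argument (Propositions \ref{prop:dim_lower_bound_nontriangle} and \ref{prop:dim_lower_bound_triangle}) to show $b_2+2b_3\geq 3$ always holds, with equality exactly when $P$ is a triangle with precisely three boundary lattice points. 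Second, and more seriously, you never construct non-hyperelliptic polygons attaining $d=g$ for generic $g$; your ``thin family near the hyperelliptic boundary'' is unspecified, and this existence question is genuinely arithmetic. The paper's characterization reduces it to triangles equivalent to $\mathrm{conv}\bigl((0,0),(0,1),(2g+1,b)\bigr)$ with $\gcd(b,2g+1)=\gcd(b-1,2g+1)=1$, and then a mod-$3$ case analysis produces a valid non-hyperelliptic $b$ for all $g\geq 13$ with $g\equiv 1 \bmod 3$ (with $g\equiv 0,2$ handled by explicit triangles and $g=10$ by an ad hoc example). This is exactly where the exceptions arise: for $g=4$ and $g=7$ the admissible values $b\in\{2,5,8\}$ and $b\in\{2,8,14\}$ all give \emph{hyperelliptic} triangles. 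Your explanation of the role of non-hyperellipticity is inverted: two-dimensionality of $P_{\textrm{int}}$ is the definition of non-hyperelliptic, and the obstruction at $g\in\{4,7\}$ is that every genus-$g$ triangle with exactly three boundary points happens to be hyperelliptic, not that thinness drives dimensions down among non-hyperelliptic polygons. Your fallback of enumerating all genus-$4$ and genus-$7$ polygons is valid in principle (there are finitely many up to unimodular equivalence) but is a substantial unexecuted computation that the structural characterization renders unnecessary.

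On the interpolation step, two corrections are needed. You cannot arrange for the dimension to drop by \emph{exactly} one per surgery, and you do not need to: the paper uses the fact from \cite[\S 5]{small2017dimensions} that removing a single lattice point (with $P_{i+1}\subset P_i$ and the \emph{same interior polygon}, not merely the same count of interior points, so that genus is preserved) drops the moduli dimension by \emph{at most} one, and then a discrete intermediate value argument between explicit endpoints — a rectangle or trapezoid with $\dim(\mathbb{M}_P)=2g+1$ and a four-boundary-point subpolygon $Q$ with $Q_{\textrm{int}}=P_{\textrm{int}}$ and $\dim(\mathbb{M}_Q)=g+1$ — sweeps out $[g+1,2g+1]$. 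This lemma also resolves your stated worry about controlling the maximum over triangulations at each step, without invoking the $\dim(\mathcal{M}_P)$ bounds of \cite{cv} along the chain. Note that the chain only reaches $g+1$: the value $d=g$ must be handled separately by the triangle construction above, and $d=2g+2$ for $g=7$ separately by the honeycomb of \cite[Theorem 1.1]{bjms}, so the endpoint cases are not symmetric perturbations of a uniform argument but independent constructions.
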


Using the fact that $\dim\left(\mathcal{M}_P\right)=\dim\left(\mathbb{M}_P\right)$ for any non-hyperelliptic polygon $P$, we immediately obtain the following result for algebraic curves.

\begin{corollary}
Theorem \ref{theorem:non-hyperelliptic} still holds if we replace $\mathbb{M}_P$ with $\mathcal{M}_P$.
\end{corollary}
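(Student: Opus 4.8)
The plan is to deduce the corollary directly from Theorem \ref{theorem:non-hyperelliptic} together with the equality $\dim(\mathcal{M}_P)=\dim(\mathbb{M}_P)$ for non-hyperelliptic $P$, established in \cite{small2017dimensions} and recalled in the discussion above. Since that equality holds \emph{polygon by polygon}, the set of integers $d$ realized as $\dim(\mathcal{M}_P)$ as $P$ ranges over non-hyperelliptic polygons of genus $g$ coincides exactly with the set of integers realized as $\dim(\mathbb{M}_P)$ over the same class of polygons. Thus the biconditional in Theorem \ref{theorem:non-hyperelliptic} transfers verbatim once we substitute $\mathcal{M}_P$ for $\mathbb{M}_P$.

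Concretely, I would fix a genus $g\geq 3$ and an integer $d$, and check both directions. For the ``if'' direction, suppose $l(g)\leq d\leq u(g)$; Theorem \ref{theorem:non-hyperelliptic} produces a non-hyperelliptic polygon $P$ of genus $g$ with $\dim(\mathbb{M}_P)=d$, and applying $\dim(\mathcal{M}_P)=\dim(\mathbb{M}_P)$ to that same $P$ gives $\dim(\mathcal{M}_P)=d$. For the ``only if'' direction, given any non-hyperelliptic $P$ of genus $g$ with $\dim(\mathcal{M}_P)=d$, the same equality yields $\dim(\mathbb{M}_P)=d$, whence Theorem \ref{theorem:non-hyperelliptic} forces $l(g)\leq d\leq u(g)$.

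Because the argument invokes only an equality that already holds for every non-hyperelliptic polygon, there is no substantive obstacle to address; the single point meriting a word of care is that the polygons furnished by (and constrained in) Theorem \ref{theorem:non-hyperelliptic} all lie in the non-hyperelliptic class, so the cited equality \cite{small2017dimensions} genuinely applies to each of them. This is immediate, since the statement of Theorem \ref{theorem:non-hyperelliptic} is confined to non-hyperelliptic polygons throughout, which is exactly the setting in which $\dim(\mathcal{M}_P)=\dim(\mathbb{M}_P)$ is known.
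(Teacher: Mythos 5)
Your proposal is correct and matches the paper's own argument exactly: the corollary is deduced immediately from Theorem \ref{theorem:non-hyperelliptic} together with the polygon-by-polygon equality $\dim(\mathcal{M}_P)=\dim(\mathbb{M}_P)$ for non-hyperelliptic $P$ from \cite{small2017dimensions}. Your extra care in checking both directions of the biconditional, and in noting that the equality applies because all polygons involved are non-hyperelliptic, simply makes explicit what the paper leaves implicit.
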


We also have a result similar to Theorem \ref{theorem:non-hyperelliptic} for hyperelliptic polygons.

\begin{theorem}\label{theorem:hyperelliptic}  There exists a hyperelliptic polygon $P$ of genus $g\geq 2$ with $\dim(\mathbb{M}_P)=d$ if and only if
\[g\leq d\leq 2g-1.\]

\end{theorem}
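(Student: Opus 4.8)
The plan is to prove the two bounds and the realizability statement separately. The upper bound $d\le 2g-1$ is already available: as recalled in the introduction, the work of \cite{cv} and \cite{bjms} shows that the maximum moduli dimension of a hyperelliptic polygon of genus $g$ is $2g-1$, so no hyperelliptic $P$ can exceed this. It therefore remains to establish the lower bound $\dim(\M_P)\ge g$ for every hyperelliptic polygon, and to construct, for each integer $d$ with $g\le d\le 2g-1$, a hyperelliptic polygon of genus $g$ whose moduli dimension is exactly $d$.

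For the lower bound I would exploit the rigid combinatorial structure of hyperelliptic skeletons. Up to unimodular equivalence every hyperelliptic polygon of genus $g$ has its interior points at $(1,1),\dots,(g,1)$ and lies in the strip $0\le y\le 2$; the collinear interior points force the skeleton of any dual smooth tropical curve to be a chain of $g$ loops, with one loop corresponding to each interior point. I would fix one convenient regular unimodular triangulation $\mathcal{T}$ of $P$ (for instance one that triangulates the regions above and below the line $y=1$ in a standard staircase fashion) and exhibit $g$ linearly independent deformation directions in the cone $\M_\mathcal{T}\subseteq\R^{3g-3}$: varying the triangles incident to a single interior point changes that loop's length while the others can be held fixed. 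This gives $\dim(\M_P)\ge\dim(\M_\mathcal{T})\ge g$.

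For realizability I would argue by induction on $g$, building the range one loop at a time. In genus $2$ the two base cases are a thin hyperelliptic polygon realizing $\dim=g=2$ and a thick one realizing $\dim=2g-1=3$, which can be exhibited explicitly and checked directly. For the inductive step I would describe two operations that append a new interior point $(g+1,1)$ at the right-hand end of a genus-$g$ hyperelliptic polygon: a first operation that extends the boundary so the new loop is free but the bridge joining it to the previous loop is forced, raising the moduli dimension by exactly $1$; and a second operation that leaves room above and below the new point so that both the new loop and its bridge are free, raising the dimension by exactly $2$. Applying these to a genus-$g$ family realizing every value in $[g,2g-1]$ yields, via the first operation, every value in $[g+1,2g]$, and via the second, every value in $[g+2,2g+1]$; since $g+2\le 2g+1$, the union is exactly $[g+1,2g+1]=[g+1,2(g+1)-1]$, completing the induction. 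Throughout one must verify that each constructed polygon is genuinely hyperelliptic of the intended genus, which follows from keeping all new interior points on the line $y=1$.

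The main obstacle is the precise bookkeeping in the inductive step: one must prove that the two append operations change $\dim(\M_P)$ by \emph{exactly} $1$ and \emph{exactly} $2$, not merely by at least that much. This requires controlling how the moduli dimension --- a maximum over all regular unimodular triangulations $\mathcal{T}$ of $P$ --- gains free edge lengths when a loop and a bridge are added, together with the new linear relations among edge lengths imposed by the enlarged Newton polygon. Showing that a forced bridge really is determined by the remaining lengths (so that the dimension rises by only $1$) is the delicate direction, since it amounts to ruling out that \emph{some} triangulation of the modified polygon frees that bridge; I expect this to require a careful local case analysis of the triangulations near the new interior point, possibly supplemented by an explicit classification of hyperelliptic polygons and their triangulations.
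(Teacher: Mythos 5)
Your outline has the right shape (cite the known upper bound, prove the lower bound, realize every intermediate value), but both of its load-bearing steps have genuine gaps, and in each case the missing ingredient is precisely the machinery the paper builds. For the lower bound, the claim that a ``staircase'' triangulation gives $g$ independent deformations, one per loop, does not survive the extremal cases. Take the triangle $\mathrm{conv}\left((0,0),(g+1,1),(1,2)\right)$ (Class 2(a) of Theorem \ref{theorem:hyperelliptic_classification} with $i=j=0$): its only boundary lattice points are the three vertices, so in \emph{every} unimodular triangulation each middle interior point connects to exactly the two boundary points $(0,0)$ and $(1,2)$, and the left end point only to collinear boundary points. The resulting length constraints couple the loops: each height $h_{i,i+1}$ is a forced linear function of $h_{i-1,i}$ and the width $u_i$, and $e=2h_{1,2}$, so you cannot vary one loop's length while freezing the others; the dimension still comes out to $g$, but only after the careful constraint analysis of Section \ref{section:hyperelliptic}, which yields $\dim(\mathbb{M}_\mathcal{T})=2g-1-b_e-b_m$ (Proposition \ref{dimPolyProp}). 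Worse, since $b_e\leq 2$ and $b_m\leq g-2$, a single triangulation only guarantees dimension $\geq g-1$; ruling out a hyperelliptic polygon all of whose triangulations are fully degenerate (which would give moduli dimension $g-1$, falsifying the theorem's lower bound) is done in the paper only by running the formula over Koelman's entire classification (Proposition \ref{prop:all_hyp_dim}), not by any single-triangulation argument.

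For realizability, you correctly identify the crux yourself: proving the two append operations raise $\dim(\mathbb{M}_P)$ by \emph{exactly} $1$ and \emph{exactly} $2$ is a statement about the maximum over all triangulations of the enlarged polygon, and nothing in the proposal delivers it. Note also that appending $(g+1,1)$ changes the shape of the right end, so triangulations of $P$ do not simply extend to the new polygon, and even the upper estimate $\dim(\mathbb{M}_{P'})\leq\dim(\mathbb{M}_P)+2$ is not automatic without a formula controlling how end and middle points contribute. Once one has Proposition \ref{dimPolyProp} together with the classification, the induction becomes unnecessary: the paper computes $\dim(\mathbb{M}_P)$ for every hyperelliptic polygon and then simply reads off that Class 2(a) with $j=0$ and $i=d-g$ has $\dim(\mathbb{M}_P)=\min\{g+i,2g-1\}=d$, while the bounds $g\leq\dim(\mathbb{M}_P)\leq 2g-1$ follow from $i,j\geq 0$. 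So your route could in principle be completed, but filling its gaps essentially forces you to develop the same dimension formula and case analysis, after which the inductive scaffolding buys nothing over the direct computation.
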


Indeed, we go further by determining $\dim(\mathbb{M}_P)$ for every hyperelliptic polygon $P$, as classifed in \cite{Koelman}.  


Our paper is organized as follows. In Section \ref{section:background} we establish necessarily terminology and recall known results, in particular ones concerning the moduli dimension of a triangulation.  In Section \ref{section:non-hyperelliptic} we prove Theorem \ref{theorem:non-hyperelliptic}, with particular care given to determining precisely when the lower bound is achieved in the non-hyperelliptic case.  Finally, in Section \ref{section:hyperelliptic} we prove Theorem \ref{theorem:hyperelliptic} and present other results for hyperelliptic polygons.

\medskip

\noindent \textbf{Acknowledgements.}  This work was carried out as part of the 2020 SMALL REU, supported by NSF Grant DMS-020262 and by Williams College.

\section{Background and Definitions}
\label{section:background}

In this section we recall necessary background on lattice polygons, tropical curves, and dimensions of moduli spaces.

\subsection{Lattice polygons}

Throughout this paper we focus primarily on two-dimensional convex lattice polygons, meaning convex polygons whose vertices have integer coordinates.  
For any such polygon $P$, we let the \textit{interior polygon} $P_\textrm{int}$ be the convex hull of all interior lattice points of $P$.  We call the boundary of $P_\textrm{int}$ the \emph{interior boundary} of $P$, and any lattice points of the interior boundary are called \emph{interior boundary points}.  Note that $P$ is non-hypereliptic precisely when $\dim(P_\textrm{int})=2$.

In the special case of a non-hyperelliptic polygon \(P\), there is a close relationship between \(P\) and \(P_\textrm{int}\).  For a two-dimensional lattice polygon \(Q\), let \(\tau_1,\ldots\tau_n\) denote the $1$-dimensional faces of \(Q\), and let \(\mathcal{H}_{\tau_i}\) denote the half-plane defined by \(\tau_i\), so that
\[Q=\bigcap_{i=1}^n \mathcal{H}_{\tau_i}.\]
We can describe  \(\mathcal{H}_{\tau_i}\) as the set of all points \((x,y)\) satisfying \(a_ix+b_iy\leq c_i\), where \(a_i,b_i,c_i\) are relatively prime integers.  We then let \(\mathcal{H}_{\tau_i}^{(-1)}\) denote the set of all points \((x,y)\) satisfying \(a_ix+b_iy\leq c_i+1\), and define the \emph{relaxed polygon of \(Q\)} to be
\[Q^{(-1)}=\bigcap_{i=1}^n \mathcal{H}_{\tau_i}^{(-1)}.\]
We remark that although \(Q^{(-1)}\) is a polygon, it need not be a lattice polygon.  However, in the case that \(Q=P_{\textrm{int}}\) for some non-hyperelliptic lattice polygon \(P\), we do have that \(P_{\textrm{int}}^{(-1)}\) is a lattice polygon, and is in fact a lattice polygon containing \(P\) with the same set of interior lattice points \cite{cv}.  If \(P=P_\textrm{int}^{(-1)}\), we say that \(P\) is a \emph{maximal} polygon, since it is maximal under containment among all polygons with \(P_\textrm{int}\) as the interior polygon.

Lattice polygons can be mapped to lattice polygons using \emph{unimodular transformations}.  These are affine linear maps that send \(\mathbb{Z}^2\) to itself, and are of the form \(p\mapsto Ap+\overline{v} \), where \(A\) is a \(2\times 2\) integer matrix with determinant \(\pm 1\) and \(\overline{v}\in\mathbb{Z}^2\) is a translation vector.  We say that two lattice polygons \(P\) and \(Q\) are \emph{equivalent} if there exists a unimodular transformation \(t\) such that \(t(P)=Q\).  Several equivalent polygons are illustrated in Figure \ref{figure:equivalent_polygons}; the leftmost can be transformed into the other three using the matrices $\left(\begin{smallmatrix}0&1\\1&0\end{smallmatrix}\right)$, $\left(\begin{smallmatrix}1&-1\\0&1\end{smallmatrix}\right)$, and $\left(\begin{smallmatrix}2&1\\1&1\end{smallmatrix}\right)$.

\begin{figure}[hbt]
    \centering
    \includegraphics[scale =1]{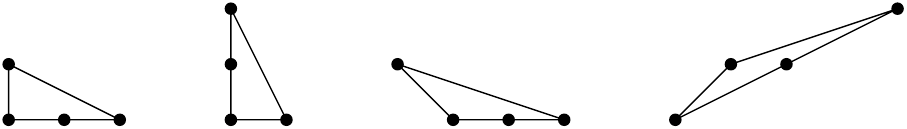}
    \caption{  Four equivalent polygons }
    \label{figure:equivalent_polygons}
\end{figure}

As illustrated by these equivalent polygons, the Euclidean length of the edges of a polygon are not preserved under unimodular transformation; for instance, sending the first to the third turns a vertical edge of length $1$ into a diagonal edge of length $\sqrt{2}$.  This leads us to consider instead the \emph{lattice length} of the edges of lattice polygons, or in general of any line segments with rational slope.  If a line segment $S$ has both endpoints as lattice points, its lattice length $\ell(S)$ is defined to be one less than the number of lattice points on the segment.  We also define any translation of such a line segment to have the same lattice length:  $\ell(S+\overline{v})=\ell(S)$ for any \(\overline{v}\in\mathbb{R}^2\).  Finally, if $S$ is a line segment with rational slope, then there exists some scalar $\lambda$ such that $\lambda S$ has a well-defined lattice length; we then say that $\ell(S)=\frac{1}{\lambda}(\lambda S)$.  Since unimodular transformations send \(\mathbb{Z}^2\) to itself, they preserve the lattice length of line segments.
 Several line segments of various lattice lengths appear in Figure \ref{figure:lattice_lengths}; note that computing the lattice length by counting up lattice points only works when both endpoints are lattice points.

\begin{figure}[hbt]
    \centering
    \includegraphics[scale =1]{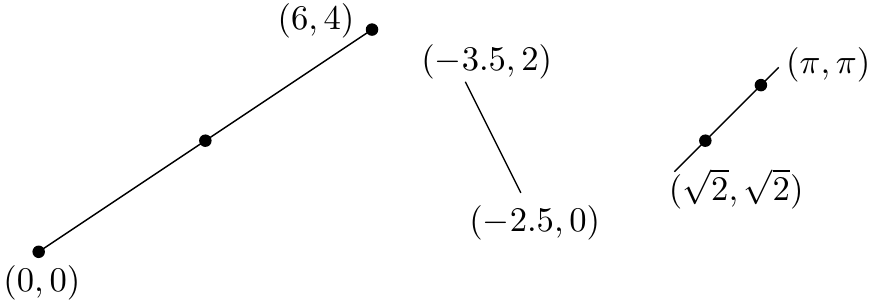}
    \caption{ Three line segments, with lattice points highlighted and endpoints labelled; from left to right, their lattice lengths are \(2\), \(1\), and \(\pi-\sqrt{2}\) }
    \label{figure:lattice_lengths}
\end{figure}

We say that a subdivision $\mathcal{S}$ of $P$ into subpolygons is \emph{regular} if there exists a height function $\omega:(P\cap\mathbb{Z}^2)\rightarrow\mathbb{R}$ such that the lower convex hull of the image of $\omega$, when projected back down onto $P$, yields $\mathcal{S}$. In this case we say that $\omega$ \emph{induces} \(\mathcal{S}\). See \cite{triangulations} for more details.  

\subsection{Tropical plane curves}

Tropical plane curves arise from regular subdivisions of lattice polygons.  The usual definition of a tropical plane curve uses the min-plus semiring \(\left(\overline{\mathbb{R}},\oplus,\odot\right)\), where \(\overline{\mathbb{R}}=\mathbb{R}\cup\infty\), \(a\oplus b=\min\{a,b\}\), and \(a\odot b=a+b\).  A polynomial in two variables over this ring is of the form
\[p(x,y)=\bigoplus_{(i,j)} c_{i,j}\odot x^i\odot y^j,\]
which can be written in classical notation as
\[\min_{(i,j)} \left(c_{i,j}+ix+ jy\right),\]
where only finitely many of the \(c_{i,j}\in \overline{\mathbb{R}}\) are distinct from \(\infty\) (the ``zero'' element of the semiring).  The \emph{tropical curve defined by \(p(x,y)\)} is the set of all points in \(\mathbb{R}^2\) where the minimum in \(p(x,y)\) is achieved at least twice.  By \cite[Theorem 3.3.5]{ms}, a tropical plane curve has the structure of a weighted, balanced polyhedral complex of pure dimension \(1\).  As discussed in the introduction, any tropical curve is dual to a regular subdivision of an associated lattice polygon.  This lattice polygon, called the {Newton polygon} of the tropical curve, is the convex hull of the exponent vectors that appear in \(p(x,y)\), i.e., of the lattice points \((i,j)\) such that \(c_{i,j}\neq \infty\).  The subdivision of the Newton polygon \(P\) to which the curve is dual is the regular subdivision induced by the function \(\omega:P\cap\mathbb{Z}^2\rightarrow \mathbb{R}\) defined by \(\omega(i,j)=c_{i,j}\).  As with lattice polygons, we measure lengths of edges on tropical curves using their lattice lengths.

Given a regular unimodular triangulation $\mathcal{T}$ of a lattice polygon $P$ with genus \(g\geq 2\), we recall the construction of the moduli space $\mathbb{M}_\mathcal{T}$; see \cite[\S 2]{bjms} for more details.  First we compute the \emph{secondary cone} $\Sigma(\mathcal{T})\subset \mathbb{R}^{P\cap\mathbb{Z}^2}$, which is the (closure of the) set of all height functions $\omega:P\cap\mathbb{Z}^2\rightarrow\mathbb{R}$ that induce the triangulation $\mathcal{T}$.  Let $E$ denote the set of bounded edges in a tropical curve dual to $\mathcal{T}$.  There exists a  linear map $\lambda:\Sigma(\mathcal{T})\rightarrow\mathbb{R}^E$ that takes a height function $\omega$ and computes from it the lattice length of each edge in $E$ in the tropical curve whose tropical polynomial has the coefficient $\omega(i,j)$ on the term $x^i\odot y^j$.  Thus \(\lambda(\Sigma(\mathcal{T}))\) is, up to closure, the set of all possible edge lengths on a tropical curve dual to \(\mathcal{T}\).  To obtain the lengths on the skeleton of such a tropical curve, we apply another linear map \(\kappa:\mathbb{R}^E\rightarrow\mathbb{R}^{3g-3}\).  This map deletes those edges that do not contribute to the skeleton, and adds up the lengths on any edges that are concatenated in the skeletonization process.  We then define \(\mathbb{M}_\mathcal{T}:=\lambda\circ \kappa(\Sigma(\mathcal{T}))\).  As the image of a polyhedral cone under a linear map, it has a well-defined dimension; recall that we refer to this as the moduli dimension of \(\mathcal{T}\).

Let $\mathcal{T}$ be any subdivision of $P$. A \textit{radial edge} of $\mathcal{T}$ is an edge that connects a boundary point of $P_\textrm{int}$ with a boundary point of $P$ without passing through the interior of $P_\textrm{int}$. As in \cite{small2017dimensions}, we classify each boundary point $p$ of $P_{int}$  based on the structure of $\mathcal{T}$ as follows:

$$p \text{ has } \begin{cases}
\text{Type } 1 & \text{if it is incident to } 1 \text{ radial edge} \\
\text{Type } 2 & \text{if it is incident to at least } 2 \text{ radial edges}\\
& \hspace{5mm} \text{and the boundary points of } P \text{ it connects to are all collinear} \\
\text{Type } 3 & \text{if it is incident to at least } 3 \text{ radial edges}\\
& \hspace{5mm} \text{and the boundary points of } P \text{ it connects to are not all collinear}\end{cases}$$

A triangulation of a polygon, along with the classification of its interior boundary points into Types 1, 2, and 3, is pictured in Figure \ref{fig:types-examples}. We remark that the condition for Type 2 does not require the boundary points of $P$ to be on the same boundary edge; for instance, a Type 2 point could connect to exactly $2$ boundary points that do not lie on a common edge.

\begin{figure}[h]
    \centering
    \includegraphics[scale = .85]{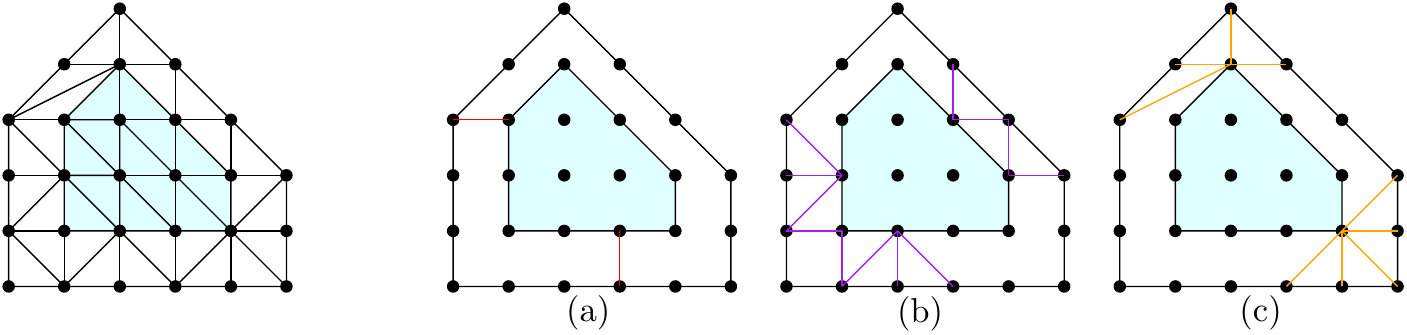}
    \caption{A lattice polygon with its interior polygon, and examples of points of Type (a) $1$, (b) $2$, and (c) $3$ with their radial edges}
    \label{fig:types-examples}
\end{figure}

The following result allows us to compute the moduli dimension of a triangulation based on this classification of interior boundary points.

\begin{theorem}[Theorem 1.2 in \cite{small2017dimensions}]\label{theorem:small2017_formula}
    If $\mathcal{T}$ is a regular unimodular triangulation of non-hyperelliptic polygon $P$ with genus $g$, then
    $$\dim(\M_\mathcal{T}) = g - 3 + \#\{\text{type } 2 \text{ points}\} + 2\cdot \#\{\text{type } 3 \text{ points}\}.$$
\end{theorem}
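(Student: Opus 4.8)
The plan is to compute $\dim(\M_\mathcal{T})$ as the rank of the composite linear map that sends a height function $\omega\in\Sigma(\mathcal{T})$ to the vector of skeleton edge lengths in $\mathbb{R}^{3g-3}$, built from the length map $\lambda$ and the skeletonization map $\kappa$. Since $\mathcal{T}$ is a regular triangulation, its secondary cone $\Sigma(\mathcal{T})$ is full-dimensional in $\mathbb{R}^{P\cap\mathbb{Z}^2}$, so this rank equals the rank of the length map on the whole space and is insensitive to the choice of representative height function. I would first record that affine height functions $\omega(i,j)=a+bi+cj$ translate the dual tropical curve rigidly and hence leave every skeleton length unchanged, so the length map factors through the quotient by this $3$-dimensional lineality space; this rigidity is one of the ingredients forcing the $-3$ shift.

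Next I would make the dependence of skeleton lengths on heights explicit. Each bounded edge of a tropical curve dual to $\mathcal{T}$ is perpendicular to, and dual to, a unique interior edge $\sigma$ of $\mathcal{T}$ separating two unimodular triangles; its lattice length is a linear form in the four heights at the vertices of the quadrilateral surrounding $\sigma$, measuring the failure of those heights to be coplanar. The map $\kappa$ then deletes the edges trimmed during skeletonization and sums lengths along concatenated edges. I would organize the interior edges of $\mathcal{T}$ into those lying strictly inside $P_\textrm{int}$, those incident to the interior boundary, and the radial edges reaching $\partial P$; only the first two families survive in the skeleton, while the radial edges govern the positions of the skeleton vertices lying near $\partial P_\textrm{int}$.

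With this in place, the heart of the argument is a local analysis at each interior boundary point $p$, determining how many independent skeleton-length variations can be produced by perturbing the heights at $p$ and at the boundary points of $P$ to which it connects, while staying inside $\Sigma(\mathcal{T})$. I expect that a Type $1$ point contributes nothing beyond the baseline, because its single radial edge pins the adjacent tropical vertex along a fixed line; a Type $2$ point contributes one extra degree of freedom, since its collinear radial targets allow the corresponding vertex to slide along that common line; and a Type $3$ point contributes two, as its non-collinear targets free the vertex to move in both planar directions. Summing these local contributions, together with a baseline count of $g-3$ obtained from the structure of the triangulation restricted to $P_\textrm{int}$ and the rigidity reductions above, would yield $g-3+\#\{\text{type }2\}+2\cdot\#\{\text{type }3\}$.

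The main obstacle will be turning these local degrees of freedom into a rigorous global rank computation: I must establish both the upper bound, that all skeleton lengths satisfy enough linear relations that the rank cannot exceed the claimed value, and the lower bound, that the local perturbations at distinct interior boundary points can be realized simultaneously and independently without global cancellation around the cycles of the skeleton. This calls for careful bookkeeping — most naturally a spanning-tree or incidence-matrix analysis of the skeleton — to verify that the contributions are genuinely additive, that no degree of freedom is double-counted where radial edges meet, and that each perturbation can be chosen to preserve regularity of $\mathcal{T}$. Confirming the precise collinearity dichotomy that separates Type $2$ from Type $3$, and that it costs exactly one degree of freedom, is the delicate point on which the whole count turns.
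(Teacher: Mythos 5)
First, a point of comparison: the paper you are working against does not prove this statement at all --- it is imported verbatim as Theorem 1.2 of \cite{small2017dimensions} and used as a black box, so the only internal evidence of a proof here is the worked genus-$3$ example following the theorem. Measured against the cited source, your framing is the right shape: $\dim(\M_\mathcal{T})$ is the rank of $\kappa\circ\lambda$ on the full-dimensional cone $\Sigma(\mathcal{T})$; the $3$-dimensional space of affine heights $(i,j)\mapsto a+bi+cj$ merely translates the dual curve and so lies in the kernel of the length map, accounting for the $-3$; each bounded edge length is a primitive linear form in the four heights around its dual interior edge; and the count is organized as a baseline plus per-type contributions of $0$, $1$, $2$, established by an upper bound (forced linear relations among lengths) and a lower bound (simultaneously realizable perturbations). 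That is consistent with how the result is actually proved.

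However, as a proof your proposal has a genuine gap, and you name it yourself: the local contributions are asserted as expectations, and the deferred bookkeeping is the entire content of the theorem. Moreover, the mechanism you offer for the Type $2$/Type $3$ dichotomy is not the right invariant. ``The vertex can slide along the common line'' measures the mobility of a trivalent vertex of the tropical curve, but $\dim(\M_\mathcal{T})$ counts independent skeleton \emph{lengths}, and vertex mobility neither implies nor is implied by a new length degree of freedom. What actually separates the types is a relation among lengths: the bounded face of the curve dual to an interior boundary point $p$ must close up in $\R^2$, and its edges dual to the radial edges at $p$ have directions perpendicular to those radial edges; when the boundary targets of $p$ are all collinear, projecting the closing condition onto a suitable direction forces the total length of the outer portion of that cycle in terms of inner edge lengths --- exactly the forced equality $f=a$ for the triangulation $\mathcal{T}_2$ in Figure \ref{figure:g3_moduli_dimension} --- costing one dimension, while a Type $1$ point costs two. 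To complete your plan you would need to (i) derive such relations uniformly from the closing conditions of the dual faces along $\partial P_\textrm{int}$ and show that no further relations exist, and (ii) exhibit one height perturbation per claimed degree of freedom, supported near distinct interior boundary points and inside $P_\textrm{int}$, and verify (e.g., by the spanning-tree or incidence-matrix analysis you mention, which you never carry out) that the resulting length vectors are linearly independent; regularity is not the obstacle, since $\Sigma(\mathcal{T})$ is full-dimensional, but independence is. Until (i) and (ii) are done, what you have is a correct plan rather than a proof.
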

Let $b_1,b_2$, and $b_3$ denote the numbers of Type 1, Type 2, and Type 3 points in a triangulation. In order to compute $\dim(\mathbb{M}_P)$ for a non-hyperelliptic polygon $P$ it suffices to maximize the value of
\[g - 3 + b_2+2b_3\]
over all regular, unimodular triangulations $\mathcal{T}$ of $P$.  By \cite{small2017dimensions}, it in fact suffices to maximize this value over all unimodular triangulations of $\mathcal{T}$; in other words, there exists a regular triangulation that achieves the maximum.

\begin{example}  Consider the two unimodular triangulations of the same lattice polygon appearing on the left in Figure \ref{figure:g3_moduli_dimension}.  The three interior lattice points are all interior boundary points. The first triangulation has three points of Type 3, while the second has two of Type 3 and one of Type 2.  According to  Theorem \ref{theorem:small2017_formula}, the first triangulation $\mathcal{T}_1$ should have moduli dimension $g-3+b_2+2b_3=3-3+0+2\cdot 3=6$, while the second triangulation $\mathcal{T}_2$ should have moduli dimension $g-3+b_2+2b_3=3-3+1+2\cdot 2=5$. We will verify this by determining which edge lengths are achievable in the skeleton of a tropical curve dual to each triangulation.

\begin{figure}[hbt]
    \centering
    \includegraphics[scale = 1]{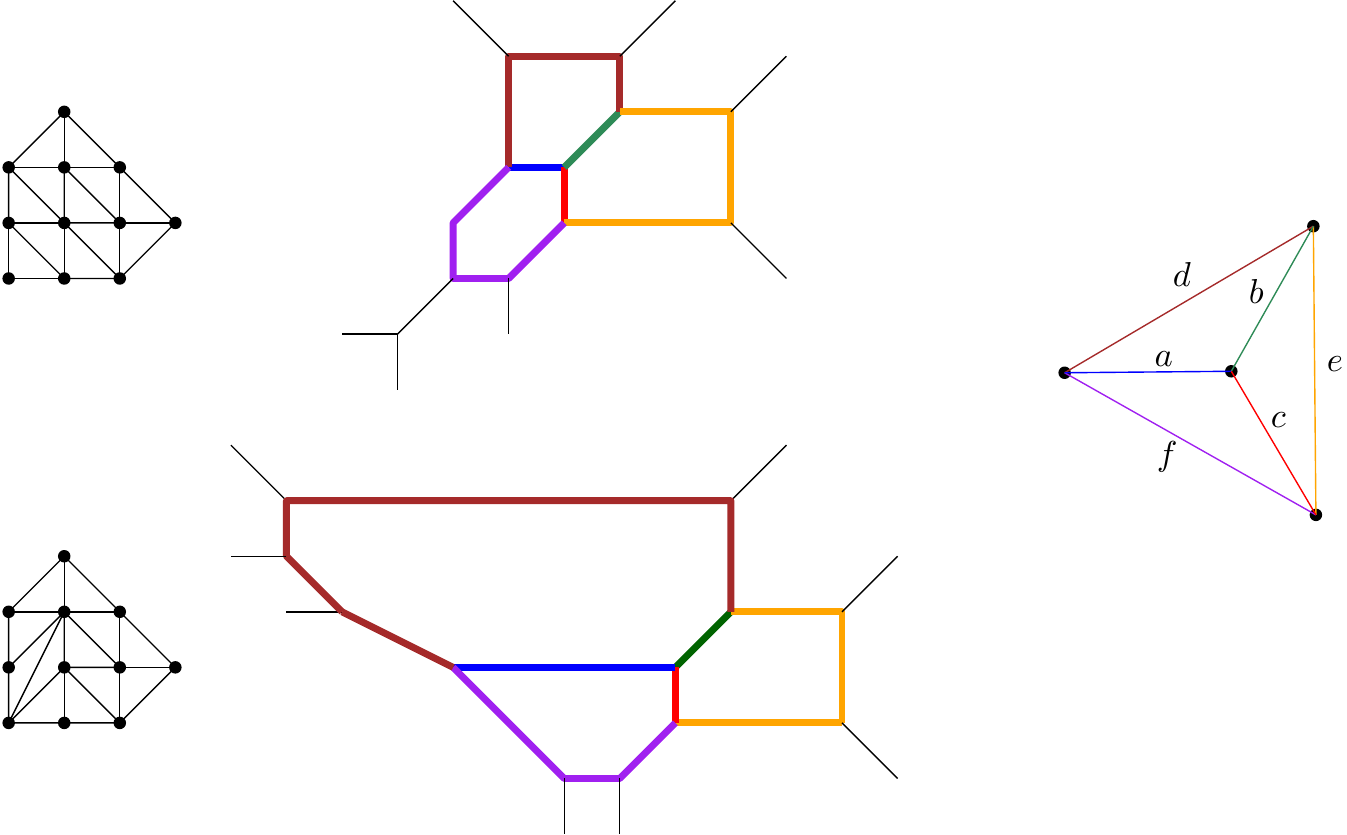}
    \caption{Two regular unimodular triangulations of a lattice polygon of genus $3$, with dual tropical curves that have the same combinatorial type of skeleton}
    \label{figure:g3_moduli_dimension}
\end{figure}

Letting $\Gamma_1$ and $\Gamma_2$ denote tropical curves dual to $\mathcal{T}_1$ and $\mathcal{T}_2$, we note that both have the same combinatorial type of skeleton, namely the complete graph on $4$ vertices illustrated to the right in Figure \ref{figure:g3_moduli_dimension}.  By symmetry we may assume that the lengths of the edges of the skeletons are $a$ through $f$, as labeled.  For $\Gamma_1$, we may choose $a,b,c$ freely.  From there (up to closure), we can choose $d,e,f$ to be any real numbers satisfying $d\geq a+b$, $e\geq b+c$, and $f\geq a+c$.  We find a similar situation with $\Gamma_2$, except that we may not choose $f$: although there are multiple ways to draw the purple edge, we always have that the total lattice length is equal to $a$ due to the slopes of the line segments contributing to the length $f$.  Thus while there are $6$ degrees of freedom in choosing skeletal edge lengths for $\Gamma_1$, there are only $5$ for $\Gamma_2$, as predicted by Theorem \ref{theorem:small2017_formula}.

Since $\dim(\mathbb{M}_P)=\max_{\mathcal{T}}\left\{\dim(\M_\mathcal{T})\right\}$, we have $\dim(\mathbb{M}_P)\geq \dim(\mathcal{T}_1)=6$.  There are several ways to see that $\dim(\mathbb{M}_P)=6$.  Using Theorem \ref{theorem:small2017_formula}, we see that the largest conceivable value of a moduli dimension occurs when all interior boundary points are of Type 3; this gives an upper bound for $g=3$ of $g-3+b_2+2b_3=3-3+0+2\cdot 3=6$, so indeed $\dim(\mathbb{M}_P)\leq 6$.  Alternatively, any skeleton arising from $P$ has $3g-3=6$ edges, so the number of degrees of freedom in choosing edge lengths is certainly no larger than $6$.

\begin{figure}[hbt]
    \centering
    \includegraphics[scale = 1]{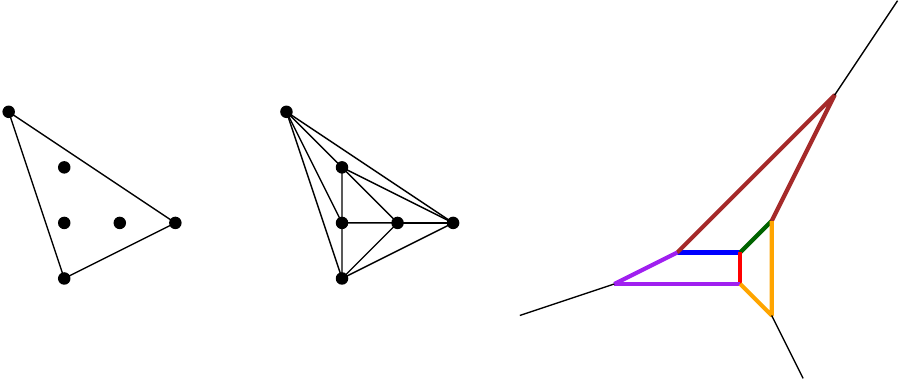}
    \caption{A polygon and a triangulation, both of which have moduli dimension \(3\), along with a tropical curve}
    \label{figure:dim_3_g_3}
\end{figure}

For a more extreme example, we consider another polygon of genus \(3\), pictured in Figure \ref{figure:dim_3_g_3} along with a unimodular triangulation and a dual tropical curve; it has the same skeleton as the curves in Figure \ref{figure:g3_moduli_dimension}.  We may choose the edge lengths \(a,b,\) and \(c\) freely, but this then determines the other three edge lengths in the skeleton, since there are only two contributing edges in the tropical curve for each and so can be drawn in a unique way.  Thus the moduli dimension of the triangulation is \(3\), as predicted by the fact that all three interior points are Type \(2\).  In fact, the moduli dimension of the polygon is equal to \(3\):  no interior lattice point can be connected to more than two boundary points, so three points of Type \(2\) is the best we can do.  This polygon is thus a non-hyperelliptic polygon with moduli dimension equal to its genus; in Proposition \ref{prop:dim_lower_bound_triangle} we will show that this occurs precisely when the polygon has exactly three boundary lattice points.

\end{example}

We close with the following lemma.  It follows immediately from the work in \cite{small2017dimensions}.

\begin{lemma}\label{lemma:nice_triangulation}
Let $P$ be a non-hyperelliptic polygon.  There exists a regular unimodular triangulation $\mathcal{T}$ of $P$ such that
\begin{itemize}
    \item[(i)] $\dim(\mathbb{M}_\mathcal{T})=\dim(\mathbb{M}_P)$,
    \item[(ii)] the boundary of $P_\textrm{int}$ appears in $\mathcal{T}$, and
    \item[(iii)] all edges exterior to $P_\textrm{int}$ are radial edges.
\end{itemize}
\end{lemma}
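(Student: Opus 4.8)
The plan is to begin with a dimension-maximizing triangulation and then modify it only in the region outside $P_\textrm{int}$, where all radial edges live, forcing properties (ii) and (iii) without lowering the moduli dimension. The two quoted consequences of \cite{small2017dimensions} do the heavy lifting: there is a regular unimodular triangulation $\mathcal{T}_0$ with $\dim(\mathbb{M}_{\mathcal{T}_0})=\dim(\mathbb{M}_P)$, and by Theorem \ref{theorem:small2017_formula} every moduli dimension is computed by the formula $g-3+b_2+2b_3$, which sees only the types of the interior boundary points.

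The first observation I would record is that this formula depends only on the radial edges of the triangulation, and that every radial edge lies in the closed region $R:=\overline{P\setminus P_\textrm{int}}$ between $\partial P_\textrm{int}$ and $\partial P$. Consequently the triangulation of the interior of $P_\textrm{int}$ is irrelevant to $\dim(\mathbb{M}_\mathcal{T})$, and I am free to keep the restriction of $\mathcal{T}_0$ to $P_\textrm{int}$ while re-triangulating $R$ at will. I would then replace the restriction of $\mathcal{T}_0$ to $R$ by a triangulation whose edges are only radial edges together with segments of $\partial P$ and $\partial P_\textrm{int}$; such a \emph{radial triangulation} of the lattice annulus $R$ can be built by fanning out from the interior boundary points, and it exists because $P$ is non-hyperelliptic, so $P_\textrm{int}$ is two-dimensional and $R$ is a genuine annular region. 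By construction $\partial P_\textrm{int}$ now appears in $\mathcal{T}$, giving (ii), and every edge exterior to $P_\textrm{int}$ is radial, giving (iii).

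It remains to see that this surgery does not decrease the moduli dimension, which would then force equality by maximality of $\dim(\mathbb{M}_P)$ and yield (i). The key monotonicity is that, recalling two boundary points are always collinear, passing to a radial triangulation can only raise the type of an interior boundary point in the order $1<2<3$: adding a radial edge takes Type $1$ to Type $2$, and a third non-collinear radial edge takes Type $2$ to Type $3$, while a point's type never drops as long as the new triangulation retains the radial edges that were present before. I would therefore choose the radial triangulation of $R$ to contain all radial edges already present in $\mathcal{T}_0$ and complete it to a full triangulation; since the formula $g-3+b_2+2b_3$ is nondecreasing under such type upgrades, we get $\dim(\mathbb{M}_\mathcal{T})\geq\dim(\mathbb{M}_{\mathcal{T}_0})=\dim(\mathbb{M}_P)$, hence equality.

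The main obstacle is entirely contained in the re-triangulation step and is twofold. First, I must check that the radial edges of $\mathcal{T}_0$ in $R$ can always be completed to a unimodular radial triangulation of $R$ that retains them, so that no interior boundary point is downgraded. Second, and more delicate, I must verify that the glued triangulation is again \emph{regular}: I would produce a witnessing height function by starting from one that induces $\mathcal{T}_0$ on $P_\textrm{int}$ and then lifting the boundary points of $P$ far enough that the lower hull folds along $\partial P_\textrm{int}$ and fans out radially across $R$, and then confirm that these heights simultaneously induce the retained interior triangulation and the chosen radial triangulation of $R$. Both of these are exactly the constructions carried out in \cite{small2017dimensions}, which is why the lemma follows immediately once that machinery is invoked.
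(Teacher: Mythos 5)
Your proposal is correct and takes essentially the paper's route: the paper offers no independent argument, stating that the lemma ``follows immediately from the work in \cite{small2017dimensions},'' and the two steps you isolate as delicate --- completing the retained radial edges to a unimodular triangulation of the region between $\partial P_\textrm{int}$ and $\partial P$, and certifying regularity of the glued triangulation via a height function --- are exactly the constructions from that reference which the paper is invoking, while your type-monotonicity observation correctly explains why the dimension formula $g-3+b_2+2b_3$ cannot drop. One cosmetic caveat: since $\mathcal{T}_0$ may contain triangles straddling $\partial P_\textrm{int}$ (e.g.\ an edge from a lattice point strictly interior to $P_\textrm{int}$ to a boundary point of $P$, crossing $\partial P_\textrm{int}$ at a non-lattice point), ``the restriction of $\mathcal{T}_0$ to $P_\textrm{int}$'' need not be a triangulation of $P_\textrm{int}$, so you must re-triangulate part of the interior as well --- harmless, because Theorem \ref{theorem:small2017_formula} is blind to the triangulation inside $P_\textrm{int}$.
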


\section{Moduli dimensions of non-hyperelliptic polygons}
\label{section:non-hyperelliptic}
The main goal of this section is to prove Theorem \ref{theorem:non-hyperelliptic}. Throughout we will use the formula $g-3+b_2+2b_3$ for the moduli dimension of a triangulation.  We remind the reader that to compute the moduli dimension of a polygon $P$, it suffices to maximize the value of $g-3+b_2+2b_3$ over all unimodular triangulations of $P$; we need not worry about regularity by \cite[\S 5]{small2017dimensions}.

We begin with the following proposition, which will help with the lower bound.

\begin{proposition}\label{prop:dim_lower_bound_nontriangle}
If $P$ is a non-hyperelliptic polygon with at least $4$ vertices, then  $\textrm{dim}(\mathbb{M}_P) \geq g+1$.
\end{proposition}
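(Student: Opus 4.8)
The plan is to use the formula $\dim(\mathbb{M}_P) = g-3+b_2+2b_3$ from Theorem \ref{theorem:small2017_formula}, maximized over unimodular triangulations, and exhibit a single well-chosen triangulation $\mathcal{T}$ whose interior boundary points contribute enough to force $g-3+b_2+2b_3 \geq g+1$, i.e. $b_2+2b_3\geq 4$. By Lemma \ref{lemma:nice_triangulation} I may assume the boundary of $P_\textrm{int}$ appears in $\mathcal{T}$ and that all edges exterior to $P_\textrm{int}$ are radial. The key structural observation I want to exploit is that $P$ having at least $4$ vertices is exactly the hypothesis that distinguishes this proposition from the degenerate triangle case (where $\dim(\mathbb{M}_P)=g$), so the extra vertices of $P$ should translate into extra radial edges and hence higher-type interior boundary points.

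First I would reduce to understanding the interaction between the vertices of $P$ and the interior boundary points to which radial edges can be drawn. The natural approach is to consider each vertex $v$ of $P$: since $v$ lies on the boundary of $P$, any triangle of $\mathcal{T}$ containing $v$ connects $v$ (via radial edges) to some interior boundary point(s) of $P_\textrm{int}$. The goal is to arrange the radial edges so that the interior boundary points accumulate enough radial incidences to become Type $2$ or Type $3$. Concretely, I would try to show that I can always produce a triangulation in which $b_2 + 2b_3 \geq 4$ — for instance by finding at least two Type $3$ points, or one Type $3$ and two Type $2$ points, or four Type $2$ points, depending on the geometry.

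The main step is a case analysis on $P_\textrm{int}$. The cleanest path is to handle separately the case where $P_\textrm{int}$ is itself two-dimensional with several boundary points (the generic situation, where it is easy to route three or more radial edges to two distinct interior points, making them Type $3$) versus the degenerate case where $P_\textrm{int}$ is a segment or a single point or a minimal triangle. When $P_\textrm{int}$ is a single point $p$, having at least $4$ vertices on $P$ means $p$ can be connected by radial edges to at least $4$ boundary points of $P$, which are not all collinear (since $P$ is two-dimensional with $\geq 4$ vertices), so $p$ is Type $3$ and contributes $2$; then I must find an additional Type $2$ or augment to reach a total of $4$, which may require a more careful argument or a supplementary lemma. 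When $P_\textrm{int}$ is a segment, the endpoints and the structure of radial edges from the several vertices of $P$ should again be pushed to yield the needed count.

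The hard part will be the degenerate low-genus cases, particularly when $P_\textrm{int}$ is very small (a point, a segment, or a unimodular triangle), because then there are few interior boundary points to absorb radial incidences, and I must verify that the abundance of vertices of $P$ genuinely forces enough high-type points rather than merely many Type $1$ points. I expect the proof to require checking that a single interior boundary point cannot ``waste'' the radial edges — i.e. that one cannot route all radial edges from $P$'s vertices into collinear configurations that keep every point Type $1$ or Type $2$ while still triangulating. Establishing that the $4$-vertex hypothesis rules out such pathological routings, uniformly across the shapes of $P_\textrm{int}$, is the crux; everything else is bookkeeping with the formula $g-3+b_2+2b_3$.
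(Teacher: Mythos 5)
There is a genuine gap, and it starts with the hypotheses. You structure your argument around a case analysis in which the ``hard part'' is when $P_\textrm{int}$ is a single point or a segment --- but those cases are vacuous: as the paper notes in Section \ref{section:background}, $P$ is non-hyperelliptic precisely when $\dim(P_\textrm{int})=2$, so the hypothesis of the proposition already guarantees that $P_\textrm{int}$ is two-dimensional with at least three vertices. Deferring the crux to excluded configurations means the actual content of the proposition lives entirely in what you call the ``generic situation,'' and there your argument is only a gesture: ``it is easy to route three or more radial edges to two distinct interior points, making them Type $3$'' is not established, and it is not even clearly true as stated, because an interior boundary point incident to many radial edges is still only Type $2$ if the boundary points it connects to are collinear. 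You flag this collinearity worry yourself but offer no mechanism to resolve it.

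The paper's proof supplies exactly the machinery your sketch is missing, and it is worth seeing why each piece is needed. First, rather than building one clever triangulation, it takes any $\mathcal{T}$ as in Lemma \ref{lemma:nice_triangulation} and passes to the \emph{coarser} subdivision $\mathcal{T}'$ using only the vertices $v_1,\ldots,v_n$ of $P_\textrm{int}$ and $w_1,\ldots,w_m$ of $P$; since types can only decrease under coarsening, it suffices to bound $b_2'+2b_3'$, and since no three vertices of $P$ are collinear, the type of each $v_i$ in $\mathcal{T}'$ is simply the number of radial edges at $v_i$ --- this is what kills the collinearity pathology. Second, it needs a quantitative input you have no substitute for: by \cite[Lemma 4.3]{small2017dimensions} there are exactly $n+m\geq n+4$ radial edges in $\mathcal{T}'$, each incident to exactly one interior vertex, so after giving each $v_i$ one edge there are at least four ``promotions,'' each adding $1$ to $b_2'+2b_3'$. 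Third, there is a subtlety your proposal does not anticipate: a vertex incident to four radial edges wastes an edge (Type $3$ contributes only $2$), so the paper includes a rebalancing step --- moving a radial edge $\overline{v_iw_j}$ to $\overline{v_{i+1}w_j}$ --- and observes that if rebalancing ever creates another $4$-edge vertex, then two adjacent vertices were already Type $3$ and the bound holds outright. Without the coarsening trick, the edge count, and the rebalancing, your outline cannot be completed as written.
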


\begin{proof}
Let $v_1, \dots, v_n$ and $w_1, \dots, w_m$  be the  vertices of $P_{\textrm{int}}$ and $P$, respectively, ordered cyclically.  Let $\mathcal{T}$ be a triangulation of $P$ as in Lemma \ref{lemma:nice_triangulation}, with $b_2$ and $b_3$ interior lattice points of Types $2$ and $3$. Now consider the coarser subdivision $\mathcal{T}'$ obtained by ignoring all lattice points besides $v_1, \dots, v_n$ and $w_1, \dots, w_m$.  We may still classify the interior points $v_1, \dots, v_n$ into Types 1, 2, and 3, and we note that their type can only decrease in passing from $\mathcal{T}$ to $\mathcal{T}'$.  Thus letting $b_2'$ and $b_3'$ denote the numbers of Type 2 and Type 3 points in $\mathcal{T}'$, we have $b_2'+2b_3'\leq b_2+2b_3$.  We will show that $b_2'+2b_3'\geq 4$, implying that $\dim(\mathbb{M}_P)=g-3+b_2+2b_3\geq g-3+4=g+1$.

Since no three exterior boundary vertices are collinear, the Type ($1$, $2$, or $3$) of each $v_i$ in $\mathcal{T}'$ is equal to the number of exterior  vertices to which it is connected by a radial edge. Based on our assumption on $\mathcal{T}$ and by \cite[Lemma 4.3]{small2017dimensions}, there are $n+m\geq n+4$ radial edges connecting interior vertices to exterior vertices in $\mathcal{T}'$.  Each radial edge is incident to exactly one interior vertex, so at least $n+4$ radial edges are partitioned among $n$ interior vertices, each of which has at least one radial edge.
    
Assume for the moment that at least one interior vertex $v_i$ is incident to more than $3$ radial edges.  Note that $v_i$ can only be connected to vertices on the two relaxations of the edges of $P_\textrm{int}$ incident to $v_i$; each relaxed edge has at most two vertices, and so $v_i$ is connected to at most (and thus exactly) four boundary vertices.  We note that $v_i$ and $v_{i+1}$ share visibility to an exterior boundary point $w_j$. Therefore, the radial edge $\overline{v_iw_j}$ can be replaced with $\overline{v_{i+1}w_j}$ in the subdivision. If this causes $v_{i+1}$ to have four radial edges, this means that both $v_i$ and $v_{i+1}$ were Type 3 points to begin with, and we have $\dim(\mathbb{M}_P) \geq g+1$. If this situation never arises, then, we can iteratively change our subdivision $\mathcal{T}'$ so that no vertex is connected to more than three radial edges (likewise, we change $\mathcal{T}$ to be any unimodular refinement of $\mathcal{T}$).
    
At this point we are partitioning at least $n+4$ radial edges among $n$ interior vertices, so that each gets somewhere between $1$ and $3$ radial edges.  We begin, then, by delegating one radial edge to each $v_i$ so we have at least four remaining radial edges to be distributed.  Each additional edge promotes any Type 1 point to a Type 2 and any Type 2 to a Type 3. There are four or more promotions, each of which contributes $1$ to $b_2' + 2b_3'$, and so we have $b_2' + 2b_3' \geq 4$. This completes the proof.
\end{proof}

We now handle the lower bound in the case of non-hyperelliptic triangles.

\begin{proposition}\label{prop:dim_lower_bound_triangle}
For a non-hyperelliptic triangle $P$, we have
 $\textrm{dim}(\mathbb{M}_P) = g$ if and only if $P$ has exactly three boundary points. Otherwise, $\textrm{dim}(\mathbb{M}_P) \geq  g+1$. 
\end{proposition}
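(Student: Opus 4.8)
The plan is to prove both directions of the biconditional, then handle the ``otherwise'' clause. For the forward direction, suppose $P$ is a non-hyperelliptic triangle with exactly three boundary lattice points, meaning the three vertices $w_1, w_2, w_3$ are the only lattice points on $\partial P$ and each edge of $P$ has lattice length $1$. I would argue that in \emph{any} unimodular triangulation $\mathcal{T}$, each interior boundary point of $P_{\textrm{int}}$ can be connected by radial edges to at most these three vertices, but in fact the geometry forces each such point to ``see'' at most two of them: because the three vertices are not collinear but there are only three of them, a Type $3$ point would need radial edges to all three, and I would show this cannot happen for an interior boundary point due to the relaxed-edge constraint (an interior vertex $v_i$ connects only to boundary points on the two relaxations of the edges of $P_{\textrm{int}}$ incident to it). The goal is to conclude $b_3 = 0$ and that the best achievable is all interior boundary points being Type $2$, giving $b_2 + 2b_3 = (\text{number of interior boundary points})$, and then to check that with only three boundary points the total promotion count is limited to give $b_2 + 2b_3 \leq 3$, hence $\dim(\mathbb{M}_P) = g - 3 + 3 = g$.

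For the converse and the ``otherwise'' clause, I would show that if $P$ is a non-hyperelliptic triangle with more than three boundary points, then $\dim(\mathbb{M}_P) \geq g+1$. Here the argument should parallel the proof of Proposition \ref{prop:dim_lower_bound_nontriangle}: I would take a triangulation $\mathcal{T}$ as in Lemma \ref{lemma:nice_triangulation}, pass to the coarse subdivision $\mathcal{T}'$ on the vertices of $P$ and $P_{\textrm{int}}$, and count radial edges using \cite[Lemma 4.3]{small2017dimensions}. The key difference from the non-triangle case is that $P$ has only $m = 3$ vertices, so the radial-edge count $n + m$ gives only $n + 3$ radial edges among $n$ interior vertices, yielding only $3$ guaranteed promotions and hence $b_2' + 2b_3' \geq 3$ rather than $\geq 4$. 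The extra boundary lattice point (beyond the three vertices) is what I must exploit: since $P$ has a fourth boundary lattice point, it lies in the interior of some edge of $P$, and I would use it to produce one additional radial edge (or to upgrade the type of some $v_i$), supplying the fourth promotion and giving $b_2' + 2b_3' \geq 4$, thus $\dim(\mathbb{M}_P) \geq g+1$.

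The main obstacle will be making the forward direction airtight: proving that three boundary points genuinely \emph{cap} $b_2 + 2b_3$ at $3$ rather than merely bounding it loosely. The subtle point is that the maximization is over all unimodular triangulations, so I must rule out \emph{every} triangulation achieving a value exceeding $3$. I expect to need a careful visibility argument showing that with only three boundary vertices, no interior boundary point can simultaneously acquire radial edges to three non-collinear boundary points (which would make it Type $3$), combined with a global counting constraint: the three vertices must be ``shared'' among all the interior boundary points, so the total number of radial edges is tightly controlled. I anticipate the cleanest route is to observe that the total radial-edge count equals $n + m = n + 3$ by \cite[Lemma 4.3]{small2017dimensions}, so after the mandatory one edge per interior vertex only $3$ promotions remain, forcing $b_2 + 2b_3 \leq 3$; the work lies in confirming that no interior vertex can be Type $3$ here, so that these $3$ surplus edges cannot be concentrated to exceed the bound, and that Type $3$ points do not arise from the finer triangulation $\mathcal{T}$ either.
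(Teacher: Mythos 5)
Your forward direction ends up, after some detours, at essentially the paper's argument, but the detour itself is a problem: the claim that $b_3=0$ in every triangulation is both unnecessary and not clearly true, and the relaxed-edge constraint does not obviously deliver it (two of the three vertices of $P$ could lie on a single relaxed line $e^{(-1)}$ of $P_\textrm{int}$, so a vertex of $P_\textrm{int}$ connecting to all three $w_i$ is not immediately excluded). The paper never rules out Type $3$ points. Instead it takes a triangulation as in Lemma \ref{lemma:nice_triangulation} (this restriction matters: your radial-edge count is invalid for an arbitrary unimodular triangulation, where $\partial P_\textrm{int}$ need not even appear) and computes the degree sum of the $N$ interior boundary points in the annulus subgraph: with only three boundary points, the Type of each point is exactly its number of radial edges, i.e.\ $\deg(u)-2$, and the count of $N+3$ radial edges yields the exact identity $b_1+2b_2+3b_3=N+3$, hence $b_2+2b_3=3$ --- which permits $(b_2,b_3)=(1,1)$. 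Note this identity gives you equality, not just the upper bound $b_2+2b_3\leq 3$; your sketch concludes $\dim(\mathbb{M}_P)=g$ from an upper bound alone, silently assuming attainment, whereas the identity (holding for every nice triangulation, in particular a maximizing one) supplies both directions at once.

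The genuine gap is in your converse. You propose that the extra boundary point $w$ ``produces one additional radial edge,'' supplying a fourth promotion --- but an extra radial edge need not promote anything, and there is a concrete failure mode: $w$ lies in the interior of an edge $\overline{w_1w_2}$ of $P$, so $w$, $w_1$, $w_2$ are collinear; a vertex already joined to both $w_1$ and $w_2$ remains Type $2$ after being joined to $w$, by the collinearity clause in the definition of Type $2$. So the new edge can easily contribute $0$ to $b_2+2b_3$. This is precisely the difficulty the paper's proof is built around: it constructs a fine triangulation in which $v_1$ is joined to $w_1$ and $w_2$ but $v_2$ only to $w_2$, then removes $\overline{v_1w_2}$ and joins both $v_1$ and $v_2$ to $w$, and verifies the promotion of $v_2$ via the relaxed-line condition --- any other boundary neighbor of $v_2$ lies on $e'^{(-1)}$ while $w\notin e'^{(-1)}$, so the new neighbors are genuinely non-collinear --- while checking that $v_1$'s type is unchanged, before refining to a unimodular triangulation (where types can only increase). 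Your plan also leans on the coarse count $n+m$ from \cite[Lemma 4.3]{small2017dimensions} as in Proposition \ref{prop:dim_lower_bound_nontriangle}, but $w$ is not a vertex of $P$, so that count does not see it; without an explicit construction of the kind above, the ``fourth promotion'' is asserted rather than proved.
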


\begin{proof}

We use the same labeling of boundary and interior vertices as the previous proof.  First assume that $P$ has no boundary points besides its three vertices $w_1$, $w_2$, and $w_3$, and let $N$ denote the number of boundary lattice points of $P_\textrm{int}$.  Consider a triangulation $\mathcal{T}$ as in Lemma \ref{lemma:nice_triangulation}.  Viewing $\mathcal{T}$ as a graph, we consider the subgraph $G$ on $N+3$ nodes consisting of the boundaries of $P$ and $P_\textrm{int}$ as well as the radial edges connecting them.  Note the Type of an interior boundary point is determined by how many of $w_1$, $w_2$, and $w_3$ it is connected to.  Since every such point is also connected to two interior lattice points, the Type of such a vertex is thus equal to $\deg(u)-2$.  The sum of the degrees of the interior boundary points is $3N+3$, where $2N$ comes from the inner cycle and $N+3$ comes from the radial edges.  This means
\[3N+3=b_1+2b_2+3b_3+2N,\]
or
\[N=b_1+2b_2+3b_3-3.\]
Since $N$ is also equal to $b_1+b_2+b_3$, it follows that
\[b_1+b_2+b_3=b_1+2b_2+3b_3-3,\]
or
\[3=b_2+2b_3.\]
We therefore have $\dim(\mathbb{M}_P)=\dim(\mathbb{M}_\mathcal{T})=g-3+b_2+2b_3=g$, as claimed.

Now we assume that $P$ has more than $3$ boundary points.  This means that at least one edge of $P$, say $f=\overline{w_1w_2}$, contains a nonvertex boundary point; call it $w$.  Choose $e$ to be an edge of $P_\textrm{int}$ such that $f\subset e^{(-1)}$; by relabelling we may assume that the endpoints of $e$ are $v_1$ and $v_2$.  First, triangulate the point set $\{w_1,w_2,w_3\}\cup\left(P_\textrm{int}\cap \mathbb{Z}\right)$ so that we include the boundary of ${P}_\textrm{int}$; so that there are no edges connecting the boundary of $P$ to itself; so that $v_1$ is connected to both $w_1$ and $w_2$ while $v_2$ is only connected to $w_2$; and so that the triangulation is \emph{fine}, meaning it cannot be subdivided any further without adding new points. By the argument from the previous case, this triangulation has $b_2+2b_3=3$. Note that if $v_2$ is the intersection of the edges $e$ and $e'$, then it can only be connected to lattice points in $e'^{(-1)}$, and thus is either Type 1 (if it is only connected to $w_2$) or Type 2 (if it is connected to another lattice point on $e'^{(-1)}$).  Now add in $w$ to our point set, remove the edge from $v_1$ to $w_2$, and connect both $v_1$ and $v_2$ to $w$.  Whatever type of point $v_1$ was before, it still is after this operation.  If $v_2$ had been Type 1, then it has become Type 2; and if it were Type 2, then it has become Type 3 since $w\notin e'^{(-1)}$.  Thus we have $b_2+2b_3=4$.  Add in any remaining boundary points and refine to a unimodular triangulation, wich will satisfy $b_2+2b_3\geq 4$.  We conclude that $\dim(\mathbb{M}_P)\geq g+1$. 

\end{proof}
    
Having established a lower bound on $\textrm{dim}(\mathbb{M}_P)$, we must now determine which values are actually achievable.  We first handle the case of $\textrm{dim}(\mathbb{M}_P)=g$.   
Combining the previous two results, we see that a non-hyperelliptic lattice polygon of genus $g$ has moduli dimension $g$ if and only if has exactly three boundary points.  The following result tells us when such a polygon exists.

\begin{proposition}\label{prop:dim_g_existence}
There exists a non-hyperelliptic polygon of genus $g$ with exactly three boundary points if and only if $g\geq 3$ with $g\notin \{4,7\}$.
\end{proposition}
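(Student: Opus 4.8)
The plan is to reduce the statement to a clean number-theoretic existence question about coprime pairs, and to dispatch the two exceptional genera by a finite computation. First I would normalize. A polygon with exactly three boundary lattice points is a triangle all of whose edges are primitive (lattice length one), and by Pick's theorem its area is $g+\tfrac12$. Sending one primitive edge to the segment from $(0,0)$ to $(1,0)$ by a unimodular transformation, the apex is forced to lie at height $2g+1$, so the triangle is $T_a=\mathrm{conv}\{(0,0),(1,0),(a,2g+1)\}$ for some integer $a$; the shear $(x,y)\mapsto(x-y,y)$ reduces $a$ modulo $m:=2g+1$, and the other two edges are primitive exactly when $\gcd(a,m)=\gcd(a-1,m)=1$. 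The reflection $(x,y)\mapsto(1-x,y)$ moreover gives $T_a\cong T_{1-a}$, so only finitely many residues need ever be considered.

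Next I would analyze the interior points. The horizontal slice of $T_a$ at height $k$ (for $1\le k\le 2g$) has width $\tfrac{m-k}{m}<1$, so each row contains \emph{at most one} interior lattice point; a short computation shows row $k$ contains one precisely when $ak \bmod m > k$, located at $x=\lfloor ak/m\rfloor+1$. Grouping the $g$ interior points by $x$-coordinate into ``columns,'' each column is occupied for $k$ ranging over an interval. The polygon is non-hyperelliptic iff these $g$ points are not collinear. I would then prove the key sufficient condition: if there is an integer $a$ with $3\le a\le g$ and $\gcd(a,m)=\gcd(a-1,m)=1$, then rows $1$ and $2$ both lie in column $1$ (because $a\le g$ forces $2<m/a$), giving two interior points on the line $x=1$; meanwhile $a\ge 3$ forces fewer than $g$ points into column $1$, so some interior point has $x\ge 2$. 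These three points are not collinear, so $T_a$ is non-hyperelliptic. Hence existence follows as soon as we find consecutive integers $n,n+1\in\{2,\dots,g\}$ both coprime to $m=2g+1$ (taking $a=n+1$).

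I would then establish this coprime-pair statement for all $g\ge 3$ with $g\notin\{4,7\}$. Since $m$ is odd, $2$ and $4$ are automatically coprime to $m$, so the pair $(2,3)$ works unless $3\mid m$, and (for $g\ge 5$) the pair $(4,5)$ works unless $5\mid m$; continuing in this fashion, the crucial leverage is that $\prod_{p\mid m,\,p\le g}p$ divides $m=2g+1$, which strongly limits how many small primes can simultaneously block \emph{every} consecutive pair in the first half $[2,g]$. A short induction or CRT-density argument exploiting this bound produces a valid pair for all sufficiently large $g$, while $g\in\{3,5,6\}$ are immediate ($3\nmid m$, so $(2,3)$ works). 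For the converse direction I would verify the two exceptions directly: for $g=4$ the admissible residues are $a\in\{2,5,8\}\pmod 9$ and for $g=7$ they are $a\in\{2,8,14\}\pmod{15}$, and computing the interior points of each representative shows they all lie on the line $x=1$ or on a line of the form $k=2x-1$; every such triangle is therefore hyperelliptic, so no non-hyperelliptic example of genus $4$ or $7$ exists.

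The main obstacle I anticipate is the uniform number-theoretic step: ruling out, for every large $g$, the possibility that the small prime divisors of $2g+1$ conspire to destroy all consecutive coprime pairs in $[2,g]$. A naive union bound on blocked pairs is too weak when $2g+1$ carries several small factors, so the argument must genuinely use the constraint $\prod_{p\mid m,\,p\le g}p\le 2g+1$ to bound the number of relevant primes and turn the problem into a controlled finite case analysis rather than an open-ended one. By contrast, the normalization, the one-point-per-row structure, and the two exceptional checks are all routine once the reduction above is in place.
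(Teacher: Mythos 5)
Your geometric reduction is sound, and in places cleaner than the paper's. The normalization to $T_a=\mathrm{conv}\{(0,0),(1,0),(a,2g+1)\}$ with $\gcd(a,m)=\gcd(a-1,m)=1$, $m=2g+1$, is (up to transposition) the paper's normal form, and your one-point-per-row analysis is correct: row $k$ contains an interior point precisely when $ak \bmod m > k$, at $x=\lfloor ak/m\rfloor+1$, so any admissible $a$ with $3\le a\le g$ places $(1,1)$ and $(1,2)$ inside $T_a$ while confining at most $\lfloor (m-1)/3\rfloor < g$ of the $g$ interior points to the column $x=1$; hence $T_a$ is non-hyperelliptic. (The paper checks non-hyperellipticity by a cross-section width computation instead, and treats $g\equiv 0,2\pmod 3$ by separate explicit triangles; your remark that the pair $(2,3)$ works whenever $3\nmid m$ subsumes those cases at a stroke.) Your finite verifications for the exceptions also match the paper's: $a\in\{2,5,8\}$ mod $9$ for $g=4$ and $a\in\{2,8,14\}$ mod $15$ for $g=7$, all hyperelliptic. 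You should also record the one-line fact that $g\le 2$ forces hyperellipticity, since the statement asserts $g\ge 3$ is necessary.

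The genuine gap is the step you yourself flag: producing, for every $g\ge 3$ with $g\notin\{4,7\}$, consecutive $n,n+1\in[2,g]$ coprime to $m$. After $(2,3)$ and $(4,5)$ this is only at issue when $3\mid m$, i.e.\ $g\equiv 1\pmod 3$ --- exactly the case the paper works hardest on --- and your ``continuing in this fashion'' cascade is open-ended: if $15\mid m$ you need $(7,8)$ unless $7\mid m$; if $105\mid m$ you need, say, $(16,17)$ unless $17\mid m$; each step must dodge a fresh prime, and, as you note, the union bound $\sum_{p\mid m}2(g/p+1)$ already exceeds $g$ once $3,5,7$ all divide $m$. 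So as written the proof is incomplete precisely at its crux. It is fixable: letting $Q\le m$ be the odd radical of $m$, the residues $n$ mod $Q$ with $n(n+1)$ coprime to $Q$ number $\prod_{p\mid Q}(p-2)$ and are permuted by the involution $n\mapsto Q-1-n$, so whenever this count is at least $3$ (i.e.\ $Q\ne 3$; the case $Q=3$ is your pair $(4,5)$) some admissible $n$ lands in $[2,(Q-1)/2]\subseteq[2,g]$ --- with care needed at the endpoint $n=(m-1)/2=g$, which is exactly what goes wrong at $m=15$, $g=7$. The paper's repair is cheaper: writing $m=3r$, it chooses $b$ within $4$ of $r$ according to $r\bmod 3$, so any prime dividing both $b$ (or $b-1$) and $3r$ would have to be at most $4$, hence equal to $3$, which the congruence choice rules out; this works uniformly for $g\ge 13$, leaving $g\in\{4,7,10\}$ to finite checks. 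One of these arguments must actually be carried out before your proposal is a proof.
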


\begin{proof}
Certainly $g\geq 3$ is a necessary condition, since $g\leq 2$ yields a hyperelliptic polygon.

To prove that the claimed polygons exist, we first handle the cases of $g\equiv 0\mod 3$ and $g\equiv 2\mod 3$.  First we claim that the triangle with vertices at $(1,0)$, $(0,3)$, and $(2k+1,1)$ has exactly three boundary points and genus $3k$ for $k\geq 1$.  The lack of other boundary points follow from the fact that $\gcd(-1,3)=\gcd(2k,1)=\gcd(2k+1,-2)=1$.  For the genus, note that the area of the polygon is $\frac{1}{2}\left|\det\left(\begin{smallmatrix}-1&3\\2k&1\end{smallmatrix}\right)\right|=\frac{1}{2}\left| -1-6k\right|=3k+\frac{1}{2}$.  By Pick's Theorem, the area of the triangle is also equal to $g+\frac{3}{2}-1=g+\frac{1}{2}$, since it has exactly $3$ boundary points.  Since $3k+\frac{1}{2}=g+\frac{1}{2}$ we have $g=3k$ as claimed.  An identical argument shows that the triangle with vertices at $(0,0)$, $(1,3)$, and $(2k+2,1)$ has exactly three boundary points and genus $3k+2$ for $k\geq 1$.   Moreover, all these polygons are non-hyperelliptic;  for instance, each has the points $(1,1)$, $(1,2)$, and $(2,1)$ as interior lattice points.  Thus we have existence whenever $g\equiv 0\mod 3$ or $g\equiv 2\mod 3$.  Several of these polygons are illustrated in Figure \ref{figure:dimension_g}.

\begin{figure}[hbt]
        \centering
        \includegraphics[scale = 0.9]{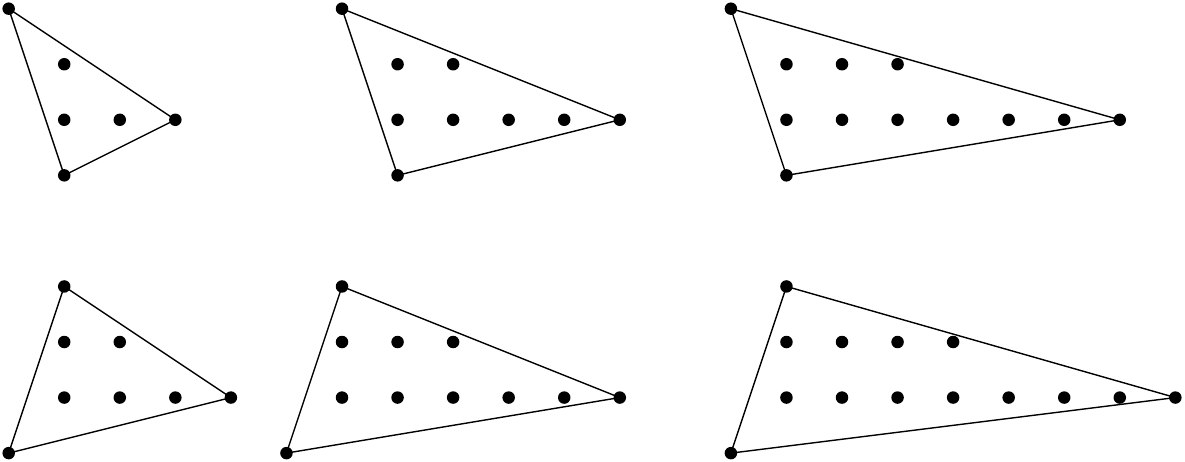}
        \caption{Polygons with moduli dimension equal to their genus}
        \label{figure:dimension_g}
\end{figure}

For the remainder of the proof we will assume $g\equiv 1\mod 3$.  Suppose $T$ is a triangle with exactly three boundary points with such a genus $g$. Since every edge of $T$ has lattice length $1$, after a unimodular transformation we may assume that $T$ has two of its vertices at $(0,0)$ and $(0,1)$.  Letting $(a,b)$ be the other vertex, we perform further unimodular transformations so that  $a,b\geq 0$:  $a\geq 0$ can be achieved with a reflection about a vertical line, possible with a translation, and similarly for $b\geq 0$.  Finally, applying a collection of shearing transformations, we may assume that $0\leq b\leq a$.

We now claim that $a=2g+1$.  To see this, we compute the area of $A$ in two ways.  First, by Pick's Theorem, it is equal to $g+\frac{1}{2}$.  Second, it is equal to $\frac{1}{2}\left|\det\left(\begin{smallmatrix}0&1\\a&b\end{smallmatrix}\right)\right|=\frac{|a|}{2}=\frac{a}{2}$, where we use the fact that $a\geq 0$.  Solving $g+\frac{1}{2}=\frac{a}{2}$, we find $a=2g+1$, as claimed.  Thus if $T$ is a triangle with exactly three lattice points, up to equivalence it has vertices $(0,0)$, $(0,1)$, and $(2g+1,b)$, where $0\leq b\leq 2g+1$ and $\gcd(2g+1,b)=\gcd(2g+1,b-1)=1$. So for each $g$, we simply need to determine whether or not there exists $b$ with $0\leq b\leq 2g+1$ such that $\textrm{conv}((0,0),(0,1),(2g+1,b))$ is non-hyperelliptic.

For now we will assume $g\geq 13$, saving the cases of $g\in\{4,7,10\}$ later.  Since $g\equiv 1\mod 3$, we know $3|(2g+1)$, so we can write $2g+1=3r$ for some $r$.  Since $2g+1$ is odd, so too is $r$. We will choose $b$ so that it satisfies the following two criteria:
\begin{itemize}
    \item $4\leq b\leq \frac{2g+1}{2}$
    \item $\gcd(b,2g+1)=\gcd(b-1,2g+1)=1$
\end{itemize}
First let us argue that this is possible, splitting into cases based on the value of $r$ modulo $3$.  In each case the fact that $b\geq 4$ will follow from the fact that $g\geq 13$.
\begin{itemize}
    \item Assume $r\equiv 0\mod 3$.  Then $r+1$ and $r+2$ do not share any factors with $r$, since they are too close to also have a factor of $3$.  Since $3\nmid r+1$ and $3\nmid r+2$, we can also conclude that $r+1$ and $r+2$ have no prime factors in common with $3r=2g+1$. Thus choosing $b=r+1$ works.
    \item  Assume $r\equiv 1\mod 3$.  Consider $r+3$ and $r+4$.  Since they are within $4$ of $r$, the only factor they could conceivably share with $r$ is $3$, but $3$ does not divide $r$, so they are relatively prime to $r$.  Thus the only conceivable prime factor they should share with $3r=2g+1$ is $3$, but $r+3$ and $r+4$ are $1$ and $2\mod 3$, respectively, so they are relatively prime to $2g+1$, as desired.
    \item  Assume $r\equiv 2\mod 3$.  Consider $r+2$ and $r+3$.  Similar to the previous argument, neither can share a prime factor with $r$, and neither is divisible by $3$, so they cannot share a prime factor with $3r=2g+1$.
\end{itemize}

Now we argue that our choice of $b$ will yield $T$ non-hyperelliptic.  The non-vertical edges of $T$ lie on the lines $y=\frac{b-1}{2g+1}x+1$ and $y=\frac{b}{2g+1}x$, respectively.  Consider the width $w(h)$ of the horizontal cross section of $T$ at height $h\in\mathbb{Z}$.  Since it has no width at height $0$, we have $h(0)=0$; for other heights, we're going from the line $y=\frac{b-1}{2g+1}x+1$ to $y=\frac{b}{2g+1}x$.  Thus the width is $x_2-x_1$, where $h=\frac{b-1}{2g+1}x_1+1$ and $h=\frac{b}{2g+1}x_2$.  Solving for $x_1$ and $x_2$ gives $x_1=\frac{(h-1)(2g+1)}{(b-1)}$ and $x_2=\frac{h(2g+1)}{b}$.  Thus we have $w(h)=x_2-x_1=\frac{h(2g+1)}{b}-\frac{(h-1)(2g+1)}{(b-1)}=(2g+1)\left(\frac{h}{b}-\frac{h-1}{b-1}\right)=\frac{(2g+1)(b-h)}{b(b-1)}$.

First note that since $b\leq\frac{2g+1}{2}$, we have $w(1)=\frac{(2g+1)(b-1)}{b(b-1)}=\frac{2g+1}{b}\geq 2$.  This means that the points $(1,1)$ and $(1,2)$ are interior to $T$.  Next we will argue that $w(2)\geq 1$, which will imply that there is an interior lattice point of $T$ at height $2$.  This means we wish to show that $\frac{(2g+1)(b-2)}{b(b-1)}\geq 1$, or equivalently that $(2g+1)(b-2)\geq b(b-1)$.  Since $b\geq 4$, we have $b-2\geq b/2$, so it suffices to show $(2g+1)\frac{b}{2}\geq b(b-1)$. This occurs when $2g+1\geq 2(b-1)$, which certainly holds since $2g+1\geq 2b$.  Thus there are at least two interior points at height $1$ and at least one interior point at height two, implying that $T$ is non-hyperelliptic.

We have now shown that there exists a non-hyperelliptic triangle of genus $g$ with exactly three boundary points whenever $g\equiv 0\mod 3$, or $g\equiv 2 \mod 3$, or $g\geq 13$ with $g\equiv 1 \mod 3$. It remains to handle the cases of $g=4$, $g=7$, and $g=10$.  By our previous work, any triangle of genus $4$ with exaclty three interior lattice points is equivalent to one with vertices at $(0,0)$, $(0,1)$, and $(9,b)$ where $0\leq b\leq 9$ and $\gcd(b,9)=\gcd(b-1,9)=1$.  It follows that $b\in \{2,5,8\}$.  However, all three choices of $b$ yield a hyperelliptic polygon.  A similar phenomenon occurs for $g=7$, where the third vertex is $(15,b)$ where $b\in\{2,8,14\}$, again yielding only hyperelliptic triangles.  In the case of $g=10$, we do manage to find a non-hyperelliptic triangle, for instance one with vertices at $(0,0)$, $(0,1)$, and $(21,5)$. This completes the proof.
\end{proof}

We are now ready to prove our main theorem for non-hyperelliptic polygons.  Recall the definitions \[l(g)=\begin{cases}g+1 &\textrm{ if $g\in \{4,7\}$}\\ g&\textrm{ otherwise} \end{cases}\]
and
\[u(g)=\begin{cases}2g &\textrm{ if $g=3$}\\2g+2 &\textrm{ if $g=7$}\\ 2g+1&\textrm{ otherwise.} \end{cases}\]

\begin{proof}[Proof of Theorem \ref{theorem:non-hyperelliptic}]
 Let $P$ be a non-hyperelliptic polygon of genus $g$.  The upper bound $\dim(\mathbb{M}_P)\leq u(g)$ follows from \cite{bjms}.  The lower bound $\ell(g)\leq \dim(\mathbb{M}_P)$ follows from a combination of Propositions \ref{prop:dim_lower_bound_nontriangle}, \ref{prop:dim_lower_bound_triangle}, and \ref{prop:dim_g_existence}. 
 It remains to show that all values of $d$ with $\ell(g)\leq d \leq u(g)$ are in fact the moduli dimension of some non-hyperelliptic polygon of genus $g$.  First we will show that all values of $d$ between $g+1$ and $2g+1$ are achieved.
 
For $g=2h$ even, set $P$ to be the rectangle $\textrm{conv}\left((0,0),(0,3),(h+1,0),(h+1,3)\right)$; for $g=2h+1$ odd, set $P$ to be the trapezoid $\textrm{conv}\left((0,0),(0,3),(h+3,0),(h,3)\right)$. The polygon $P$ has genus $g$, and by \cite[\S 4]{bjms} and \cite{small2017dimensions} we know $\dim(\mathbb{M}_P)=2g+1$.  Define $Q$ to be a subpolygon of $P$ with the same interior polygon, in the even case as $\textrm{conv}\left((1,0),(0,3),(h+1,1),(h+1,2)\right)$ and in the odd case as $\textrm{conv}\left((1,0),(0,3),(h+2,1),(h+1,2)\right)$.  These polygons $P$ and $Q$ for the even and odd cases are illustrated in Figure \ref{figure:P_and_Q}.  Because $Q$ only has $4$ boundary points (all vertices), an optimal triangulation will only yield moduli dimension $g+1$; for instance, a parallel argument to the first half of the proof of Proposition  \ref{prop:dim_lower_bound_triangle} will show this. Thus we have $\dim(\mathbb{M}_Q)=g+1$.  Consider a sequence of convex lattice polygons $P=P_0\supsetneq P_1\supsetneq P_2\supsetneq\cdots\supsetneq P_k=Q$, where $P_i$ has exactly one more lattice point than $P_{i+1}$.  Since $P_\textrm{int}=Q_\textrm{int}$, all these polygons have genus $g$.  As argued in \cite[\S 5]{small2017dimensions}, the moduli dimension can drop by at most $1$ in going from $P_i$ to $P_{i+1}$.  Since $\dim(\mathbb{M}_P)=2g+1$ and $\dim(\mathbb{M}_Q)=g+1$, every integer between $g+1$ and $2g+1$ must be equal to $\dim(\mathbb{M}_{P_i})$ for some~$i$.

\begin{figure}[hbt]
        \centering
        \includegraphics[scale = 0.9]{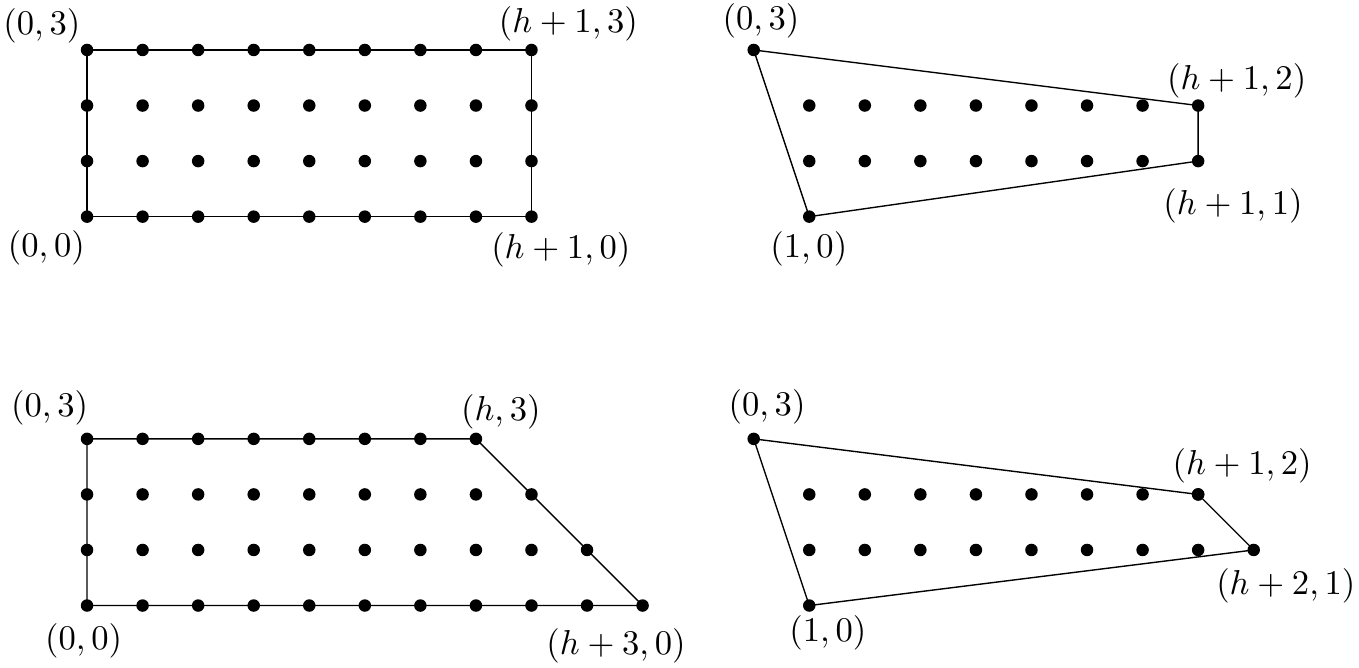}
        \caption{The polygon $P$ on the left and $Q$ on the right, in the even case (top) and the odd case (bottom)}
        \label{figure:P_and_Q}
\end{figure}

 We now have that all values of $d$ between $g+1$ and $2g+1$ are achieved as the moduli dimension of some polygon of genus $g$; the same is true for $d=g$ when $g\notin \{4,7\}$ by Proposition \ref{prop:dim_g_existence}.  This gives us all values of $d$ with $\ell(g)\leq d\leq u(g)$ when $g\neq 7$.  For $g=7$, we also need a polygon achieving the moduli dimension $2g+2$; this is furnished by \cite[Theorem 1.1]{bjms}, completing the proof.
\end{proof}

The fact that every intermediate moduli dimension between the upper and lower pounds is achieved is not something we can take for granted.  For instance, if we restrict our attention to maximal non-hyperelliptic polygons, it is no longer the case that all intermediate values are achieved as a moduli dimension.

\begin{proposition}
    Let $g\geq 20$.  If $g$ is even, there does not exist any maximal non-hyperelliptic polygon of dimension $2g-1$.  If $g$ is odd, there does not exist any maximal non-hyperelliptic polygon of dimension $2g$.
\end{proposition}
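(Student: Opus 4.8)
The plan is to reduce the proposition to a parity statement about the interior polygon. Write $Q=P_\textrm{int}$ and let $n$, $I$, and $B$ denote its numbers of vertices, interior lattice points, and boundary lattice points, so that $g=I+B$. My first task would be to pin down $\dim(\mathbb{M}_P)$ for a maximal polygon in terms of these invariants. Starting from Theorem \ref{theorem:small2017_formula} applied to a triangulation as in Lemma \ref{lemma:nice_triangulation}, I would show that for a maximal polygon the quantity $b_2+2b_3$ is maximized at $B+n$. The bound $b_2+2b_3\leq B+n$ holds because only the $n$ vertices of $Q$ can be Type $3$ — a non-vertex interior boundary point lies in the interior of a single edge of $Q$, so all of its radial edges are collinear and it is at most Type $2$ — giving $b_3\leq n$ and $b_2+b_3\leq B$. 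Attaining this bound is where maximality enters: in $P=Q^{(-1)}$ each vertex of $Q$ relaxes to a corner of $P$ wide enough to admit three non-collinear radial edges, and $\lvert\partial P\rvert\geq B+n$ so the boundary of $P$ is long enough not to obstruct them. Combining this with Theorem \ref{theorem:small2017_formula} and $B=g-I$ yields the clean formula
\[\dim(\mathbb{M}_P)=g-3+(B+n)=2g-3+(n-I),\]
whose maximum $2g+1$ occurs at $n-I=4$, consistent with \cite{bjms}.

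With this formula the two forbidden dimensions become arithmetic conditions on $Q$: $\dim(\mathbb{M}_P)=2g-1$ forces $n-I=2$, and $\dim(\mathbb{M}_P)=2g$ forces $n-I=3$. Since $B+n=g+(n-I)$, the excluded case of even $g$ with $n-I=2$ gives $B+n\equiv g\equiv 0$, and the excluded case of odd $g$ with $n-I=3$ gives $B+n\equiv g+1\equiv 0$; in both, $B+n$ is even. Conversely the permitted companion cases — odd $g$ with $n-I=2$, and even $g$ with $n-I=3$ — give $B+n$ odd. Thus the entire proposition is equivalent to the single assertion that every non-hyperelliptic interior polygon $Q$ with $n-I\in\{2,3\}$ and genus $g=I+B\geq 20$ has $B+n$ odd.

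To prove this parity claim I would exploit that $n-I\in\{2,3\}$ makes $Q$ nearly extremal among lattice polygons with $I$ interior points. A lattice polygon with $I\geq 1$ interior points has at most $2I+7$ boundary points by Scott's inequality, and $n=I+2$ or $n=I+3$ ties the vertex count tightly to $I$; together these confine $Q$ severely. I expect the resulting polygons to organize into a short list of one-parameter families of ``stretched'' shapes, generalizing the rectangles and trapezoids of Figure \ref{figure:P_and_Q}, together with finitely many sporadic polygons. For each infinite family one computes $B+n=\sum_e(\ell(e)+1)$ directly and checks that it is odd once the genus is large; the sporadic polygons with $B+n$ even — such as $\textrm{conv}((0,0),(3,0),(0,3))$ of genus $10$ and the $3\times 2$ rectangle of genus $12$ — all have genus below $20$, which is exactly what fixes the threshold in the statement.

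The main obstacle is twofold. Establishing the clean formula requires proving that, for a maximal polygon in this range, no vertex of $Q$ is ``pinched'': every corner must relax widely enough to support a Type $3$ point, and $\lvert\partial P\rvert$ must dominate $B+n$. I would obtain both from the rigidity of the relaxation $P=Q^{(-1)}$, which should rule out the over-obtuse corners that would otherwise depress the dimension below $2g-3+(n-I)$. The deeper difficulty is the parity claim itself: carrying out the classification of interior polygons with $n-I\in\{2,3\}$ completely enough to control $B+n\bmod 2$ across all large families while isolating every even sporadic example below genus $20$. This is where the bulk of the combinatorial work lies, and where the precise constant $20$ must be justified.
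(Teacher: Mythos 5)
Your proposed ``clean formula'' $\dim(\mathbb{M}_P)=g-3+(B+n)=2g-3+(n-I)$ for maximal polygons is false, and this is a fatal gap: the upper bound $b_2+2b_3\leq B+n$ is fine (Type $3$ only at vertices of $P_{\textrm{int}}$, so $b_3\leq n$ and $b_2+b_3\leq B$), but the attainment step fails. It is not enough that every corner of $Q=P_{\textrm{int}}$ relaxes to a wide corner of $P$ and that $\lvert\partial P\rvert\geq B+n$ in total; what matters is how many boundary lattice points of $P$ lie on \emph{each} relaxed edge, since an interior boundary point on an edge $e$ of $Q$ can only be promoted by connecting to lattice points on the line of $e^{(-1)}$. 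Concretely, take $g^{(1)}=0$, so $Q$ is the trapezoid $T_{a,b}$ with vertices $(0,0)$, $(0,1)$, $(b,0)$, $(a,1)$ (here $n=4$, $I=0$, so your formula predicts $2g+1$ for every such maximal polygon). The paper computes $\dim\bigl(\mathbb{M}_{T_{a,b}^{(-1)}}\bigr)=\min\{2g+1,\,g+2a+2\}$: the relaxed top edge carries only $2a-b+3$ lattice points, so when $b$ is close to its maximum $2a+2$ most height-$1$ interior points cannot be promoted, and for any $a<\lfloor g/2\rfloor$ the dimension is strictly below $2g+1$. This also destroys your reduction of the proposition to $n-I\in\{2,3\}$: for odd $g$, dimension $2g-1$ is in fact realized by a trapezoid with $n-I=4$ (choose $a$ with $g+2a+2=2g-1$), so the forbidden dimensions do not correspond to the values of $n-I$ you assign them, and the parity claim about $B+n$ is not equivalent to the statement.

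For comparison, the paper's argument runs the other way around: it first disposes of all maximal polygons with $g^{(1)}\geq 1$ using Scott's inequality \cite{scott} ($r^{(1)}\leq 2g^{(1)}+7$, whence $g^{(1)}\geq (g-7)/3$, and $g\geq 20$ forces $g^{(1)}\geq 5$ --- this is exactly where the threshold $20$ comes from) together with the Castryck--Voight bound $\dim(\mathcal{M}_P)\leq 2g+3-g^{(1)}$ \cite{cv}, which caps these at $2g-2$. It then only needs the one-parameter family with $g^{(1)}=0$, classified via \cite{panoptigons} as $T_{a,b}$ with $a\geq\frac{g-2}{3}$, where the explicit dimension $\min\{2g+1,g+2a+2\}$ has fixed parity $g\bmod 2$ below $2g+1$, skipping $2g-1$ for even $g$ and $2g$ for odd $g$. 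Note that even setting aside the false formula, your proposal defers its real content --- the classification of interior polygons with $n-I\in\{2,3\}$ and the parity of $B+n$ across all large families, plus isolating the sporadic cases below genus $20$ --- so no part of the argument that would need to do work is actually carried out.
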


\begin{proof}
The key polygons $P$ of genus $g$ in this proof will be those whose interior polygon $P_\textrm{int}$ has genus $g^{(1)}=0$.  For $g\geq 7$, we know that $P_\textrm{int}$ is a trapezoid $T_{a,b}$ with vertices at $(0,0)$, $(0,1)$, $(b,0)$, and $(a,1)$ where $0\leq a\leq b$ and $b\geq 1$.   By \cite[Proposition 4.1]{panoptigons}, $T_{a,b}$ is an interior polygon if and only if  $a\geq \frac{1}{2}b-1$, or equivalently $a\geq\frac{g-2}{3}$.  For such a $T_{a,b}$, the vertices of $T_{a,b}^{(-1)}$ are $(-1,-1)$, $(-1,2)$, $(2b-a+1,-1)$, and $(2a+1-b,2)$.

Let us compute the moduli dimension of $T_{a,b}^{(-1)}$. By \cite{small2017dimensions}, to do this we may connect the interior vertices to their corresponding boundary vertices, add in the boundary of the interior polygon, and produce a ``zig-zag'' pattern on the resulting width-$1$ polygons.  For the interior points at height $0$, this yields two points of Type 3 and $b-2$ points of Type 2 (contributing $b-2+2\cdot 2=b+2$ to $b_2+2b_3$).  Before implementing the zig-zag pattern for the top strip, there are two points (the interior vertices) at height $1$ already of Type $2$.  Each lattice point at height $2$ beyond $(0,0)$ allows us to promote one point at that height by one type, increasing $b_2+2b_3$ by one, until we get to the point that every interior point has been maximized as Type 3 for a vertex or Type 2 otherwise (yielding $b_2+2b_3=g-4+2\cdot 4=g+4$).  There are $2a-b+2$ lattice points at height $2$, so the final value of $b_2+2b_3$ is
\[\min\{g+4,b+2+2a-b+1\}=\min\{g+4,2a+3\}\]
This yields a moduli dimension of
\[\min\{2g+1,g+2a+2\}.\]
Recalling that $\frac{g-2}{3}\leq a\leq \frac{g}{2}$, the only value of $a$ for which the minimum is $2g+1$ is $a=\left\lfloor\frac{g}{2}\right\rfloor$.  For $a=\left\lfloor\frac{g}{2}\right\rfloor-1$, the moduli dimension is $g+2a+3=g+2\left\lfloor\frac{g}{2}\right\rfloor$, which equals $2g$ for $g$ even and $2g-1$ for $g$ odd.  Decreasing $a$ from here causes the moduli dimension to drop by $2$ at a time, until reaching $g+\left\lceil\frac{g-2}{3}\right\rceil+2$.  In particular, for $g$ even, none of these polygons have moduli dimension $2g-1$, and for $g$ odd none have moduli dimension $2g$.

It remains to show that no other maximal non-hyperelliptic polygon has moduli dimension $2g-1$ or $2g$.  Any other such polygon has $g^{(1)}\geq 1$.  Let $r^{(1)}$ denote the number of interior boundary points, so that  $g=r^{(1)}+g^{(1)}$.   By \cite{scott} we know that the number of boundary points of a convex lattice polygon of positive genus is at most twice its genus plus seven, so $r^{(1)}\leq 2g^{(1)}+7$.  Thus $g=r^{(1)}+g^{(1)}\leq 3g^{(1)}+7$, which implies $g^{(1)}\geq (g-7)/3$.  Since we've assumed $g\geq 20$, we have $g^{(1)}\geq (20-7)/3>4$, so $g^{(1)}\geq 5$.  By \cite[Corollary 10.6]{cv}, we have that $\dim(\mathcal{M}_P)\leq 2g+3-g^{(1)}$ for any maximal non-hyperelliptic polygon $P$.  This also serves as an upper bound on $\dim(\mathbb{M}_P)$.  Since $g^{(1)}\geq 5$, we have $\dim(\mathbb{M}_P)\leq 2g+3-5=2g-2$.  Thus no maximal non-hyperelliptic polygon with $g^{(1)}\geq 1$ and $g\geq 20$ has moduli dimension $2g-1$ or $2g$.  This completes the proof.
\end{proof}





Further investigation into the achievable moduli dimensions of maximal non-hyperelliptic polygons would be an interesting direction for future research.


\section{Moduli dimensions of hyperelliptic polygons}
\label{section:hyperelliptic}

We now move on to the case of hyperelliptic polygons, which have all interior lattice points collinear.  These are somewhat simpler to describe than non-hyperelliptic polygons, and in fact a complete classification of hyperelilptic polygons of genus $g\geq 2$ was carried out in \cite{Koelman}, and is also presented in \cite{movingout}.  We recall that result here.

\begin{theorem}[\cite{Koelman}] \label{theorem:hyperelliptic_classification} If $P$ is a hyperelliptic polygon of genus $g\geq 2$, then it is equivalent to precisely one of the following polygons, sorted into three classes.

\begin{enumerate}
    \item Class 1: 
    \begin{center}
        \includegraphics[scale = 0.2]{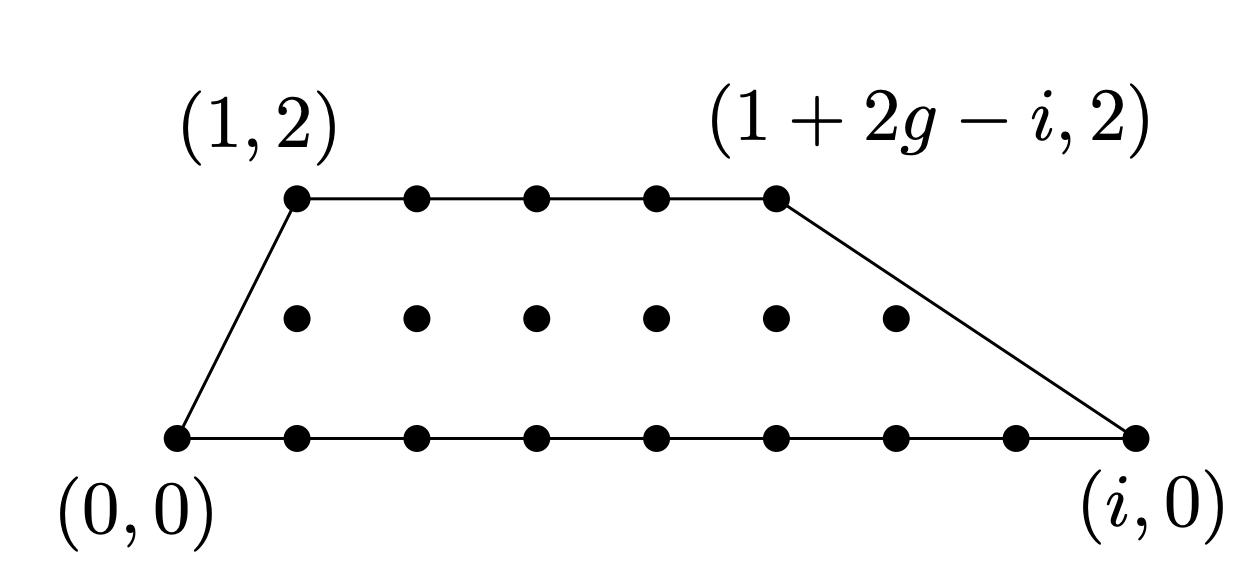}
    \end{center}
    where $g \leq i \leq 2g$
    
     \item Class 2: 
    \begin{center}
        \includegraphics[scale = 0.2]{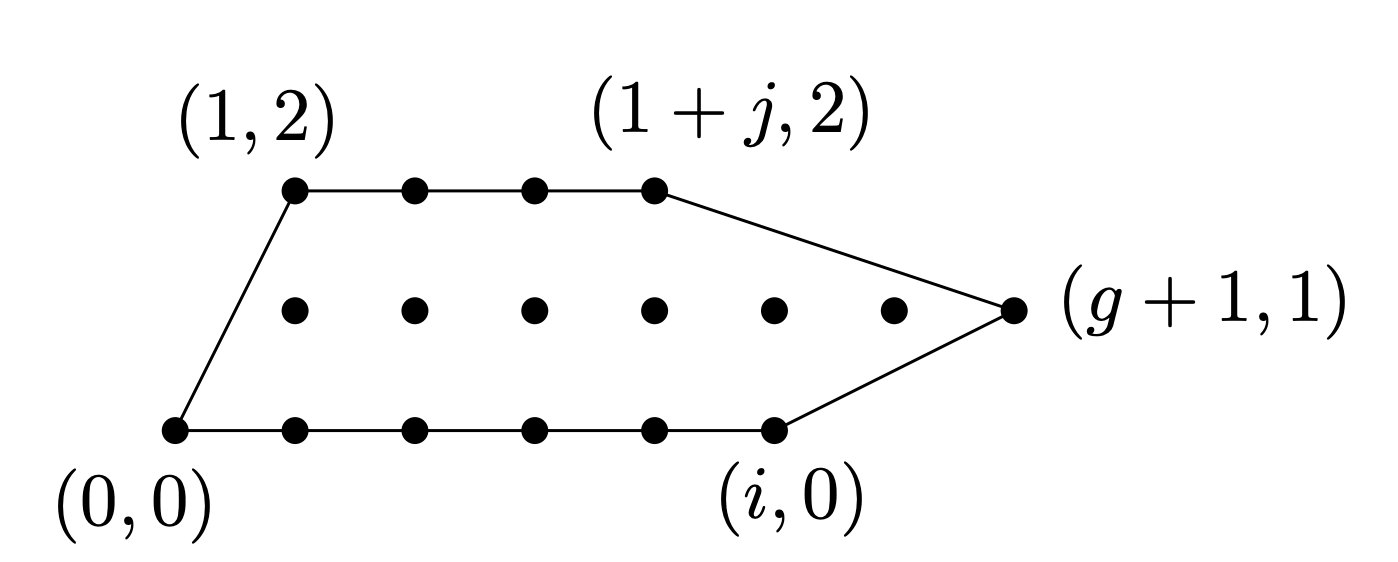}
    \end{center}
     where (a) $0 \leq i \leq g$ and $0 \leq j \leq i$; or (b) $g < i \leq 2g+1$ and $0 \leq j \leq 2g-i + 1$
    
     \item Class 3: 
    \begin{center}
        \includegraphics[scale = 0.2]{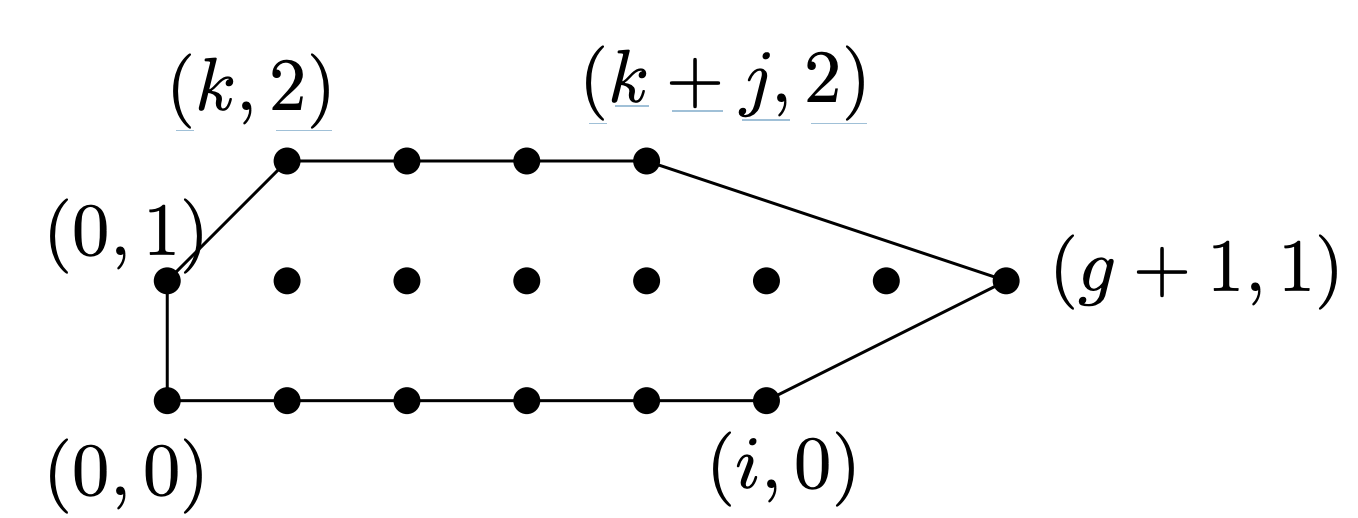}
    \end{center}
    where (a) $0 \leq k \leq g+1$ and $0 \leq i \leq g+1−k$ and $0 \leq j \leq i$; or (b) $0 \leq k \leq g+1$ and $g+1−k < i \leq 2g+2−2k$
and $0 \leq j \leq 2g − i − 2k + 1$
\end{enumerate}

\end{theorem}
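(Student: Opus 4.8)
The plan is to prove the classification directly, exploiting the collinearity of the interior lattice points to pin down the shape of a hyperelliptic polygon up to unimodular equivalence. First I would apply a unimodular transformation so that the $g$ interior lattice points lie on the line $y=1$, occupying the points $(a,1),(a+1,1),\dots,(a+g-1,1)$. The key structural step is to show that $P$ is then contained in the horizontal strip $0\le y\le 2$; equivalently, that $P$ has lattice width $2$ in the vertical direction. To see this, suppose $P$ contained a lattice point with $y\ge 3$. Since $g\ge 2$, the open segment joining $(a,1)$ to $(a+g-1,1)$ lies in the interior of $P$, so the triangle with that segment as base and the offending point as apex is contained in $P$; for $g\ge 3$ its cross-section at height $y=2$ has length at least $1$ and hence contains a lattice point, which would be an interior lattice point of $P$ off the line $y=1$, a contradiction. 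The symmetric argument rules out points with $y\le -1$, and the finitely many configurations with $g=2$ can be checked by hand. Thus every hyperelliptic polygon of genus $g\ge 2$ lives on the three rows $y\in\{0,1,2\}$.

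With $P$ confined to three rows, I would record it by the three intervals $[l_y,r_y]=\{x:(x,y)\in P\}$ for $y=0,1,2$, which by convexity satisfy $2l_1\le l_0+l_2$ and $2r_1\ge r_0+r_2$. The genus is the number of points $(x,1)$ lying in the interior of $P$, which is controlled by the overlap of the bottom and top rows with the middle row. The classification into three classes then comes from the combinatorial type of the bottom profile (row $0$ versus row $1$) and the top profile (row $2$ versus row $1$): each of rows $0$ and $2$ is either a single lattice point (the strip tapers to a vertex) or a genuine edge, and the three classes in Theorem \ref{theorem:hyperelliptic_classification} correspond exactly to these possibilities, with the parameters $i,j$ (and $k$) measuring the lengths and horizontal offsets of the top and bottom edges relative to the middle row.

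To reach the canonical representatives I would use the shearing transformations $(x,y)\mapsto(x+cy,y)$, which preserve the strip and translate each row horizontally, together with the reflections $x\mapsto -x$ and $(x,y)\mapsto(x,2-y)$. These suffice to fix the horizontal position of the middle row and to choose a normal form for the offsets, after which each polygon matches exactly one entry of the list; the parameter ranges are then forced by the requirements that the genus equal $g$ (computed by counting interior points on $y=1$, or via Pick's theorem) and that no interior point escape the line $y=1$. I expect the main obstacle to be not the strip-containment step but the final bookkeeping: verifying completeness and especially the uniqueness asserted by ``precisely one,'' i.e.\ checking that the full group of shears and reflections identifies all redundant representatives and that the stated ranges of $i$, $j$, and $k$ exactly parametrize the equivalence classes without overlap between the three classes or within a class. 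This is where the split into sub-cases (a) and (b) in Classes $2$ and $3$ originates, and handling the boundary values of the parameters (degenerate edges, and coincidences among the three classes) is the delicate part.
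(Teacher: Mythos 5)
The first thing to note is that the paper does not prove this statement at all: it is imported from Koelman's thesis \cite{Koelman} (see also \cite{movingout}), so your attempt has to be measured against the classical proof rather than anything in the paper --- and your outline does reproduce that proof's standard opening moves: put the interior points on $y=1$, show $P$ lies in the strip $0\le y\le 2$, then normalize under the stabilizer of the strip (the shears $(x,y)\mapsto(x+cy,y)$, the reflections $x\mapsto -x$ and $(x,y)\mapsto(x,2-y)$, and translations), which is indeed the right group. Your strip-containment step is essentially sound for $g\ge 3$, modulo two repairs: containment of the triangle in $P$ alone does not make the height-$2$ lattice point \emph{interior} to $P$ (you need that every height-$2$ point of the triangle lies on a segment from a point of the open interior segment at height $1$, which consists of interior points of $P$, to a point of $P$); and ``length at least $1$ hence contains a lattice point'' requires using the closed cross-section, since an open interval of length exactly $1$ (which is what you get when $g=3$ and the apex is at height $3$) can miss $\mathbb{Z}$. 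For $g=2$ your triangle gives cross-section length $(h-2)/(h-1)<1$, so the argument genuinely fails there; deferring to ``finitely many configurations checked by hand'' is legitimate in principle (finiteness up to equivalence of polygons of fixed genus is itself a known theorem) but is unexecuted.

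The genuine gap is that everything after strip containment --- which is where the theorem actually lives --- is deferred as ``bookkeeping.'' Moreover, your proposed discriminator is not the right one: ``each of rows $0$ and $2$ is either a point or a genuine edge'' gives four combinations, not three classes, and it is not how Koelman's list is organized. As Section \ref{section:hyperelliptic} of the paper makes explicit when it uses this classification, Classes 1--2 versus Class 3 are distinguished by the shape of the polygon's ends, in particular whether the boundary contains a lattice point at height $1$ there (Shape $B_k$, with a vertical edge from $(0,0)$ to $(0,1)$ and then an edge to $(k,2)$) or not (Shape $A$); and the sub-cases (a)/(b) inside Classes 2 and 3 are different parameter regimes of the \emph{same} shape, forced by the genus-$g$ count and by the need to list each equivalence class once, not combinatorial degenerations of a row. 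So the real content --- deriving the explicit families, verifying that the stated ranges of $i$, $j$, $k$ produce genus exactly $g$ (your row encoding $[l_y,r_y]$ with $2l_1\le l_0+l_2$ and $2r_1\ge r_0+r_2$ is adequate data for this, but the interior-point count at height $1$ depends on whether the ends protrude to height-$1$ vertices, which your stated dichotomy ignores), and above all the uniqueness assertion ``precisely one,'' which requires checking that the residual shear/reflection symmetries of the strip never identify two list entries --- is exactly what you flag as delicate and then omit. As it stands the proposal establishes only the easy structural half of the theorem.
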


  \begin{figure}[hbt]
        \centering
        \includegraphics[scale = 1]{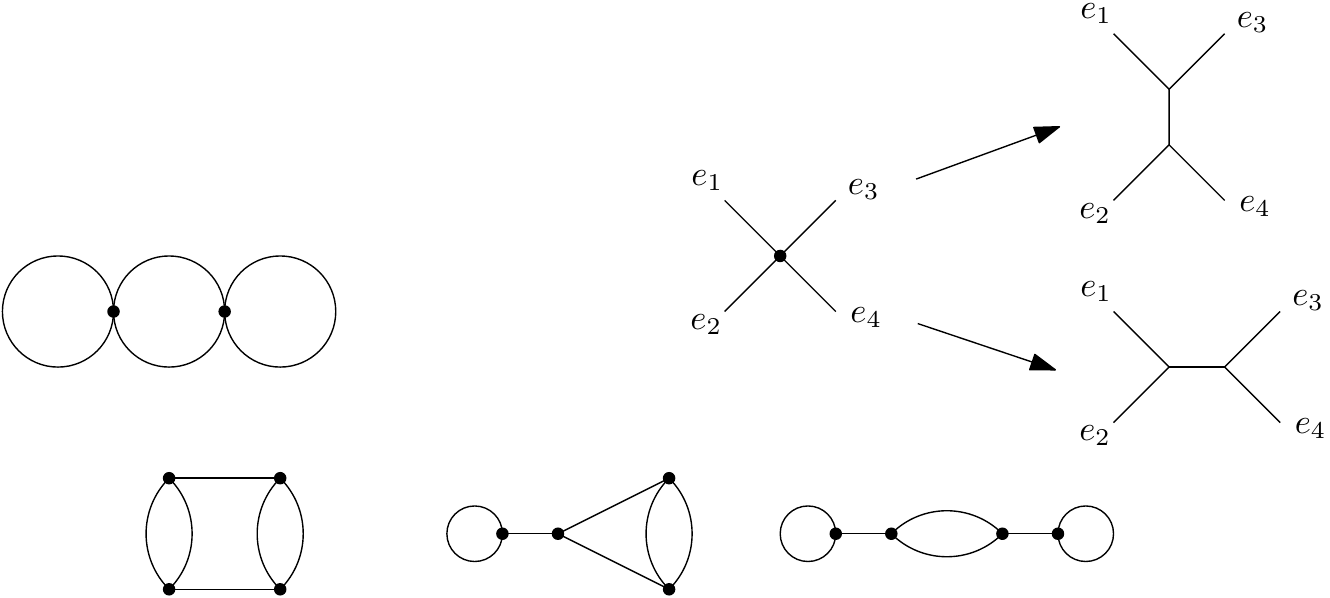}
        \caption{The starting $4$-regular graph, the two resolution choices, and the resulting trivalent chains of genus $3$}
        \label{figure:genus_3_chains}
    \end{figure}

Things are also simpler on the side of dual tropical plane curves.  As discussed in \cite[\S 6]{bjms}, such a curve's skeleton is a \emph{ chain}.  A chain of genus $g$ can be constructed as follows.  Start with a line segment on $g-1$ vertices.  Duplicate each edge to a bi-edge, and attach two loops to the two endpoints.  This gives us a $4$-regular graph. For each $4$-valent node, we resolve it into two $3$-valent nodes in one of two ways, one giving shared edge between two bounded faces and one giving a bridge connecting two bounded faces.  The starting $4$-regular graph, the two allowed resolutions for each node, and the resulting chains for genus $3$ are illustrated in Figure \ref{figure:genus_3_chains}.

We set the following labelling convention for the edge lengths of a chain, illustrated in Figure \ref{figure:chain_labels}.  We will assume our chain has come from an embedding of a tropical curve, and so has a well defined orientation of its bounded faces from left to right. The leftmost edge length is called $e$, and the rightmost $f$.  For the $i^{th}$ bounded face where $2\leq i\leq g-1$, we let $u_i$ (respectively $\ell_i$) denote the length of the upper (respectively lower) edge bounding that face.  Finally, if the $i^{th}$ and $(i+1)^{th}$ cycles share a bridge, call its length $b_{i,i+1}$ (and by convention set $h_{i,i+1}=0$), and if they instead share an edge, call its length $h_{i,i+1}$ (and by convention set $b_{i,i+1}=0$)

  \begin{figure}[hbt]
        \centering
        \includegraphics[scale = 1]{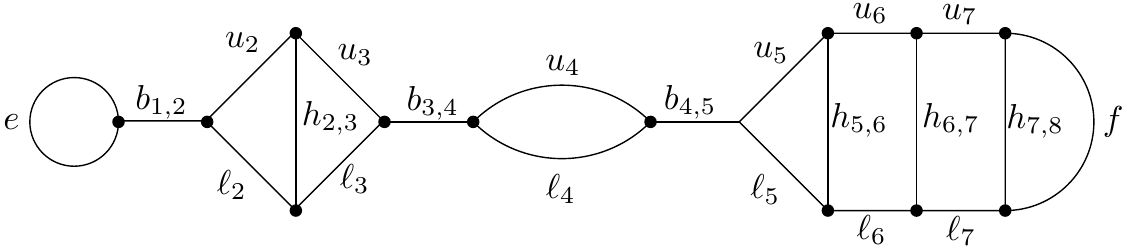}
        \caption{The edge length labels for a chain}
        \label{figure:chain_labels}
    \end{figure}
    
Given a unimodular triangulation $\mathcal{T}$ of a hyperelliptic Newton polygon $P$, we wish to consider the constraints on the edge lengths of a skeleton $\Gamma$ of a dual tropical curve.  There are a few immediate constraints (or lack thereof) on the edge lengths of $\Gamma$.  First, all edge lengths must be positive (or non-negative, after we take closures).  Second, we have $u_i=\ell_i$ for all $i$ by \cite[Lemma 2.2]{morrison-hyperelliptic}.  Third, there are no restrictions on any nonzero $b_{i,i+1}$'s; this is a general property of bridges in tropical plane curves.  Similarly, if $e$ (or $f$) is the length of a loop incident to a bridge, then there are no restrictions on it.

The further inequalities are more subtle, and depend on the structure of the triangulation $\mathcal{T}$.  Without loss of generality, we assume that $P$ is of a form from Theorem \ref{theorem:hyperelliptic_classification}, with interior lattice points at $(i,1)$ for $1\leq i\leq g$.  For each $i$, let $NW(i)$ denote the $x$-coordinate of the leftmost lattice point at height $2$ connected to $(i,1)$; $NE(i)$ that of the rightmost such point at height $2$; and similarly for $SW(i)$ and $SE(i)$ at height $0$.

Assume for the moment that $2\leq i\leq g-1$.  We claim that the only further constraint involving any of $u_i$, $h_{i,i-1}$, and $h_{i,i+1}$ is the following:
\[(2i-NE(i)-SE(i))u_i\leq h_{i-1,i}-h_{i,i+1} \leq (2i-NW(i)-SW(i))u_i.\]
This is proven in \cite[\S 3.6]{morrison_thesis}, and comes down to the fact that $2i-NW(i)-SW(i)$ is the difference of the slopes coming out of the $h_{i-1,i}$ edge to start the $i^{th}$ bounded face; by the convexity of this loop, the longest possibility for $h_{i,i+1}$ compared to $h_{i-1,i}$ is spend as much of the width $u_i$ on these edges in a tropical curve dual to $\mathcal{T}$.  This gives the upper bound; the lower bound follows from a symmetric argument.  Note that we may not necessarily choose $u_i$ and $h_{i-1,i}$ freely, and then choose $h_{i,i+1}$ to satisfy these bounds:  we must also be sure to satisfy the condition that all lengths are non-negative.




Now we consider the edge $e$.  For simplicity, we will assume that no edge in $\mathcal{T}$ connects the boundary to the boundary without cutting through the interior line segment; if there does exist such an edge, then we can iteratively pare off the triangles formed by such an edge and the boundary of $P$ and obtain a triangulation $\mathcal{T}'$ of another hyperelliptic polygon $P'$ of the same genus, realizing the exact same metric graphs.

By Theorem \ref{theorem:hyperelliptic_classification}, we know that the left end of hyperelliptic polygon has one of two shapes, either Shape $A$ (for Classes 1 and 2) or Shape $B_k$ (for Class 3).  These shapes are illustrated in Figure \ref{figure:two_end_types}.  We say that $\mathcal{T}$ \emph{has form $A(m)$ with respect to $e$} (resp. form $B_k(m)$ with respect to $e$) if the end has shape $A$ (resp. $B_k$) and if $(1,1)$ is incident to exactly $m+1$ edges in $\mathcal{T}$:  one connecting it to $(2,1)$, and $m$ connecting it to boundary points.  By our assumption on $\mathcal{T}$, for Shape $A$ we know $e$ is connected to at least $(0,0)$ and $(1,2)$, and for Shape $B_k$ to $(0,0)$, $(0,1)$, and $(k,2)$; thus for shape $A$ we have $m\geq 2$, and for shape $B_k$ we have $m\geq 3$.  Depending on the form of triangulation, we will give the precise constraints on $e$.

 \begin{figure}[hbt]
        \centering
        \includegraphics[scale = 1]{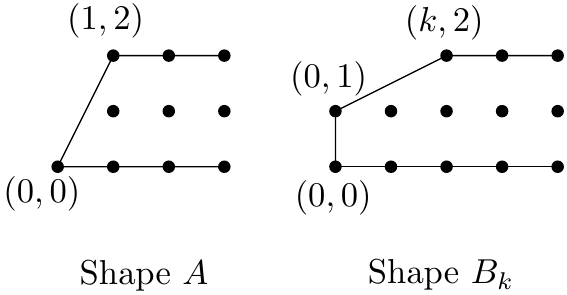}
        \caption{Two shapes of the left end of a hyperelliptic polygon}
        \label{figure:two_end_types}
    \end{figure}
    
First assume that $P$ has shape $A$.  Several forms of the triangulation $\mathcal{T}$ appear in Figure \ref{figure:A_forms}, along with the face dual to $(1,1)$ in a tropical curve.  Note that Form $A(2)$ can appear in one way, while form $A(3)$ can appear in two ways.  We first claim that if $m\geq 3$, then the only constraint on $e$ is $e\geq h_{1,2}$. This inequality is certainly a necessary condition; to see it is sufficient, we note that for form $A(3)$ and beyond there are at least two parallel edges appearing in the face dual to $(1,1)$ (either horizontal or with slope $1$) that are part of the edge $e$, allowing unlimited scaling.  For the form $A(2)$, however, we have that $e=2h_{1,2}$; this can be seen from the dual triangular face.

 \begin{figure}[hbt]
        \centering
        \includegraphics[scale = 1]{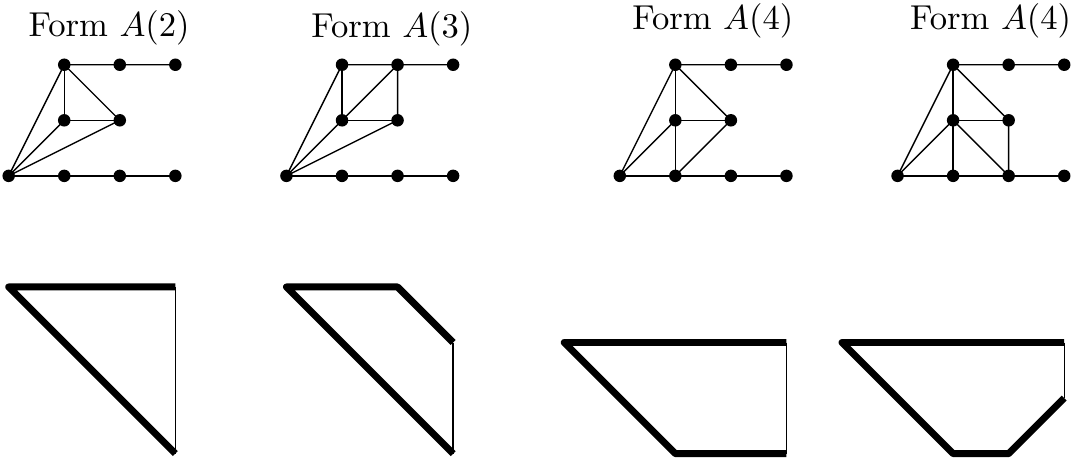}
        \caption{Several forms}
        \label{figure:A_forms}
    \end{figure}
    
Now assume that $P$ has shape $B_k$.  We illustrate several possible forms for $0\leq k\leq 2$ in Figure \ref{figure:B_forms}.  We claim that for $B_k(m)$ with $k\geq 2$, as well as for $B_1(m)$ with $m\geq 4$ and $B_0(m)$ with $m\geq 5$, the only constraint is $e\geq h_{1,2}$.  Again, this holds because we are guaranteed parallel edges contributing to $e$, allowing for unlimited scaling.  For $B_0(3)$ we find $e=h_{1,2}$, and for $B_0(4)$ and $B_1(3)$ we have $h_{1,2}\leq e\leq 2h_{1,2}$.  These again come from considering the illustrated cycles dual to $(1,1)$, and putting more or less length into components of the cycle dual to $(1,1)$.  Thus we have determined the edge constraints on $e$ in all cases.

 \begin{figure}[hbt]
        \centering
        \includegraphics[scale = 0.7]{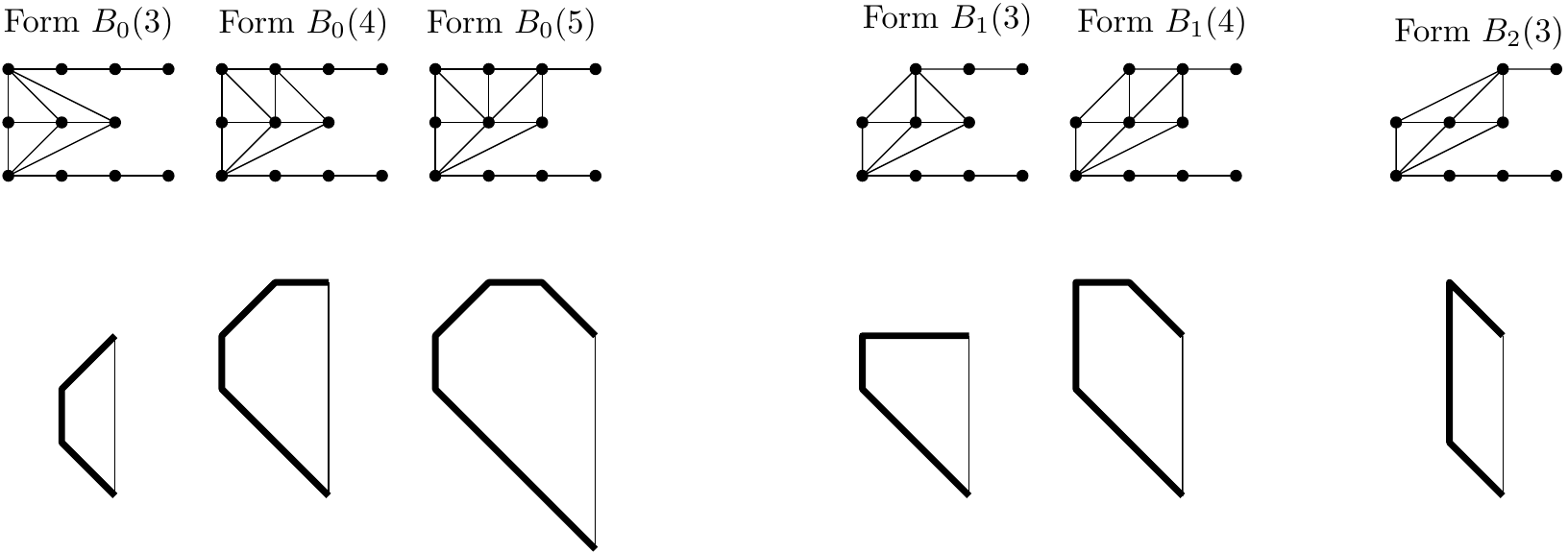}
        \caption{Several forms}
        \label{figure:B_forms}
    \end{figure}
    
A priori we cannot use our shapes to study $f$, since Theorem \ref{theorem:hyperelliptic_classification} differentiates between the left and right sides of the polygon.  However, every right end is equivalent to a left end through a reflection and a shearing transformation, meaning that we may indeed conclude that $f$ satisfies the same constraints as $e$ based on the shape of the polygon at the right end.  


\begin{proposition} \label{dimPolyProp}
    Let $\mathcal{T}$ be a unimodular triangulation of a hyperelliptic polygon $P$ polygon of genus $g \geq 2$. Let $b_e$ be the number of end interior points that are connected to another interior lattice point and boundary points that are all collinear. Let $b_m$ be the number of middle interior points connected to exactly $2$ boundary lattice points. Then we have
    \begin{equation*}
    \dim(\mathbb{M}_\mathcal{T}) = 2g-1 - b_e - b_m
    \end{equation*}
    
\end{proposition}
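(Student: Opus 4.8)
The plan is to realize $\mathbb{M}_{\mathcal{T}}$ explicitly as a polyhedral cone and then count exactly how many of its defining constraints are forced to be equalities. A genus-$g$ chain has $3g-3$ skeletal edges, but since $u_i=\ell_i$ for every middle cycle (as recalled above), the cone lies in the $(2g-1)$-dimensional subspace cut out by these $g-2$ relations. I would use as coordinates on this subspace the loop lengths $e,f$, the middle widths $u_2,\ldots,u_{g-1}$, and one connection length per adjacent pair of cycles (either $h_{i,i+1}$ or $b_{i,i+1}$ for $1\le i\le g-1$); indeed $2+(g-2)+(g-1)=2g-1$. Hence $\dim(\mathbb{M}_{\mathcal{T}})\le 2g-1$, with equality exactly when none of the remaining defining constraints collapses to an equality.

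Next I would collect the defining (in)equalities already worked out in the discussion preceding the proposition: nonnegativity of all lengths; the middle sandwich inequality $(2i-NE(i)-SE(i))u_i\le h_{i-1,i}-h_{i,i+1}\le (2i-NW(i)-SW(i))u_i$ for each $2\le i\le g-1$; and the end constraints on $e$ and $f$ governed by the forms $A(m)$ and $B_k(m)$. Each of these is a genuine two-sided inequality of positive width, and so does not lower the dimension, unless a degeneracy forces its two sides to coincide. For a middle point the sandwich collapses to the single equality $h_{i-1,i}-h_{i,i+1}=(2i-NE(i)-SE(i))u_i$ precisely when $NW(i)=NE(i)$ and $SW(i)=SE(i)$, that is, when $(i,1)$ meets exactly one boundary point above and one below; since each middle point must meet at least one boundary point on each side, this happens exactly for the $b_m$ middle points joined to two boundary points. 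For an end point the discussion shows $e$ (resp. $f$) is free up to the lone inequality $e\ge h_{1,2}$ except in the forms $A(2)$ and $B_0(3)$, where $e=2h_{1,2}$ and $e=h_{1,2}$ become equalities; these are exactly the forms in which the boundary points met by the end interior point are all collinear, so they are the $b_e$ end points.

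With the equalities identified, I would show they cut the dimension down by exactly $b_e+b_m$. For the upper bound this is a linear-independence count: the left-end equality is the only defining relation containing $e$, the right-end equality the only one containing $f$, and each degenerate middle relation for index $i$ contains the coordinate $u_i$ with coefficient $2i-NE(i)-SE(i)$, a coordinate appearing in no other such relation. When this coefficient vanishes the relation degenerates to $h_{i-1,i}=h_{i,i+1}$ and is instead bundled with its neighbors into a chain of consecutive $h$-equalities, which is still of full rank. Thus the $b_e+b_m$ equalities are independent and $\dim(\mathbb{M}_{\mathcal{T}})\le 2g-1-b_e-b_m$. For the matching lower bound I would exhibit a point in the relative interior of the cone, obtained by fixing the free coordinates and then propagating consistent, strictly positive lengths along the chain so that every remaining sandwich and end inequality is strict; this certifies that the cone is full-dimensional inside the subspace cut out by the equalities.

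The main obstacle I anticipate is the bookkeeping forced by bridges and by vanishing coefficients $2i-NE(i)-SE(i)$. When a connection is realized by a bridge we have $h_{i,i+1}=0$ and the active coordinate is $b_{i,i+1}$, so I must check that a $b_m$- or $b_e$-point never collapses its constraint to a vacuous $0=0$ and that the interior-point construction can keep every chained $h$-value nonnegative. The other delicate point is confirming that the collinearity condition defining $b_e$ singles out exactly the end forms $A(2)$ and $B_0(3)$ and no others; I would verify this directly against the finite list of end forms enumerated before the proposition, checking for each that the end interior point's boundary neighbors are collinear precisely in those two cases.
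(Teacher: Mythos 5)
Your proposal follows essentially the same route as the paper's proof: start from the $3g-3$ edge lengths of the chain, subtract the $g-2$ relations $u_i=\ell_i$, then observe that a middle constraint collapses to an equality precisely when $NE(i)=NW(i)$ and $SE(i)=SW(i)$ (i.e., the point meets exactly two boundary points, giving $b_m$), and that the end constraints collapse exactly in the forms $A(2)$ and $B_0(3)$, which are exactly the collinear-neighbor ends counted by $b_e$. The linear-independence check and the relative-interior point you add are refinements of bookkeeping the paper leaves implicit, not a different argument, so your plan matches the published proof in substance.
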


\begin{proof}
To prove this we will consider the requirements on our edge lengths.  There are $3g-3$ edges in a connected trivalent graph of genus $g$.  The conditions that $u_i=\ell_i$ for $2\leq i\leq g-2$ remove $g-2$ degrees of freedom, leaving us with at most $2g-1$ degrees of freedom.

We now ask if there are any other equalities that come out of the length constraints.  For the edges on the $i^{th}$ face with $2\leq i\leq g-1$, the only possibility to have equalities forced are if the upper and lower bounds on $h_{i-i,i}-h_{i,i+1}$ are equal to one another; in this case we lose another degree of freedom.  This happens precisely when $NE(i)+SE(i)=NW(i)+SW(i)$. But since $NE(i)\leq NW(i)$ and $SE(i)\leq SW(i)$ this happens precisely when $NE(i)=NW(i)$ and $SE(i)=SW(i)$.  This is equivalent to $(i,1)$ being connected to exactly one boundary point at height $0$, and exactly one boundary point at height $2$.  This is the same as $(i,1)$ being connected to exactly two boundary points, since it must be connected to at least $2$.  Thus we have $b_m$ more degrees of freedom lost.

Finally, we consider the edges $e$ and $f$.  We lose a degree of freedom in the cases that the form of $\mathcal{T}$ is either $A(2)$ or $B_0(3)$.  But these are precisely the forms where the corresponding interior end point is only connected to collinear boundary points.  This completes the proof.
\end{proof}

Now that we can compute the moduli dimension of a hyperelliptic triangulation, we can move towards finding the moduli dimension of a hyperelliptic polygon.
Since every hyperelliptic polygon of genus $g\geq 2$ is equivalent to exactly one of the polygons from Theorem \ref{theorem:hyperelliptic_classification}, it is enough to find the moduli dimension of each polygon in this classification.

\begin{proposition}\label{prop:all_hyp_dim}
    If $P$ is a hyperelliptic polygon of genus $g$, then
    
    \begin{equation*}
        \dim(\mathbb{M}_P) = \begin{cases} 2g-1 &\mbox{if $P$ is in Class 1} \\
        \min(g+i+j,2g-1) &\mbox{if $P$ is in Class 2(a)}\\
        2g-1 &\mbox{if $P$ is in Class 2(b)}\\
        \min(g+i+j+1,2g-1) &\mbox{if  $P$ is in Class 3(a) and } k \neq 0\\
        \min(g+i+j,2g-1) &\mbox{if $P$ is  in Class 3(a) and } k = 0\\
        \min(g+i+j+1,2g-1) &\mbox{if $P$ is in Class 3(b)},
        \end{cases}
    \end{equation*}
    where $i,j,$ and $k$ are as in the classification.
\end{proposition}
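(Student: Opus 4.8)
The plan is to combine Proposition \ref{dimPolyProp} with the identity $\dim(\mathbb{M}_P)=\max_\mathcal{T}\dim(\mathbb{M}_\mathcal{T})$. Since Proposition \ref{dimPolyProp} gives $\dim(\mathbb{M}_\mathcal{T})=2g-1-b_e-b_m$, maximizing over triangulations is the same as minimizing $b_e+b_m$, so the proposition is equivalent to the assertion
\[\min_\mathcal{T}(b_e+b_m)=\max\{0,D\},\]
where $D$ is the ``deficit'' read off from the right-hand side of each case (for instance $D=g-1-i-j$ in Class 2(a) and $D=g-2-i-j$ in Class 3(b), since $2g-1-D$ recovers $g+i+j$ and $g+i+j+1$ respectively). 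I would therefore fix the explicit coordinates of each polygon in Theorem \ref{theorem:hyperelliptic_classification}, record for each the number of lattice points on its top and bottom edges together with the shape ($A$ or $B_k$) of each end, and then compute this minimum class by class.

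First I would set up the bookkeeping that controls $b_m$. For a middle interior point $(i,1)$, convexity of $P$ forces the boundary points reachable by a radial edge at height $0$ (resp. height $2$) to form a contiguous arc bounded by the columns $SW(i),SE(i)$ (resp. $NW(i),NE(i)$) introduced before Proposition \ref{dimPolyProp}. Such a point escapes $b_m$ exactly when it has at least three radial edges, i.e. when it reaches a second boundary point on at least one of the two sides. The number of these ``extra'' connections is not free: summing the radial degrees of all interior points and comparing with the number of lattice points on the top and bottom edges yields a conservation identity in the spirit of the count $3N+3=b_1+2b_2+3b_3+2N$ from Proposition \ref{prop:dim_lower_bound_triangle}. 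This identity caps the total number of promotions (middle points raised from $2$ to $\ge 3$ connections) by a polygon-intrinsic quantity, which I would express in terms of $g,i,j$; that cap is precisely what produces $D$.

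With the bound in hand I would prove both directions. For achievability I construct an explicit triangulation: include the interior segment and the interior boundary, connect each interior point to as many boundary points as its arcs allow in a greedy left-to-right ``zig-zag'' as in \cite{small2017dimensions}, and choose the end forms to avoid $A(2)$ and $B_0(3)$ whenever the width permits. In the wide classes (Class 1, Class 2(b), and the large-parameter cases with $D\le 0$) every middle point becomes trivalent-or-better and both ends non-collinear, giving $b_e+b_m=0$ and dimension $2g-1$; in the narrow cases this realizes exactly $b_e+b_m=D$. For optimality I would show no triangulation beats this, using the conservation identity to bound the promotions and the end analysis preceding Proposition \ref{dimPolyProp} to handle $b_e$. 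This last point explains the extra $+1$ for Class 3 with $k\neq 0$: a $B_k$ end with $k\ge 1$ is never in a bad form, since its minimal form $B_k(3)$ already imposes only the inequality $e\ge h_{1,2}$, and so never contributes to $b_e$; by contrast an $A$ end or a $B_0$ end is forced into its bad form $A(2)$ or $B_0(3)$ whenever the polygon is too narrow to grant its end point a third boundary connection.

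The main obstacle is the lower-bound (optimality) direction, and within it the end bookkeeping. Bounding $b_m$ is a clean counting argument, but controlling $b_e$ requires the careful analysis of the forms $A(m)$ and $B_k(m)$, and tying the availability of end promotions to the parameters $i,j,k$. The remaining labor is routine but voluminous: translating the figures of Theorem \ref{theorem:hyperelliptic_classification} into exact edge-point counts for all six sub-cases and tracking the off-by-one contributions of the two ends, which is exactly where the distinction between the $g+i+j$ and $g+i+j+1$ formulas lives.
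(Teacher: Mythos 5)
Your proposal follows essentially the same route as the paper's own proof: you maximize the formula $\dim(\mathbb{M}_\mathcal{T})=2g-1-b_e-b_m$ from Proposition \ref{dimPolyProp} over triangulations, cap the number of ``promotions'' of middle interior points by the $i+j$ unit segments available on the horizontal edges (exactly the paper's counting argument for Class 2(a)), control $b_e$ via the end forms $A(m)$ and $B_k(m)$ (including the key point that a $B_k$ end with $k\geq 1$ never forces an equality on $e$, which is how the paper gets the extra $+1$ in Class 3), and realize the bound with an explicit greedy triangulation as the paper does for Classes 1 and 2(b). The only slip is harmless: by the paper's end analysis, $B_1(3)$ actually imposes the two-sided constraint $h_{1,2}\leq e\leq 2h_{1,2}$ rather than only $e\geq h_{1,2}$, but since this is an inequality and not an equality it still contributes nothing to $b_e$, exactly as you conclude.
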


\begin{proof}

First assume that $P$ is in Class $1$, or in Class $2$ with condition (b) satisfied.  Start a triangulation of $P$ as follows.  Connect all interior points in a line segment, and connect the point $(1,2)$ to all interior lattice points.  Then for $1\leq \ell\leq g$, connect the interior lattice point $(\ell,1)$ to the boundary points $(\ell-1,0)$ and $(\ell,0)$; this is possible since $i\geq g$. The start of this triangulation is depicted in Figure \ref{figure:class_1_triangulation} for a Class 1 polygon. Complete this to a unimodular triangulation in any way; this is guaranteed to be regular by \cite{counting_lattice_triangulations}.  Note that both end interior points are connected to at least $3$ not-all-collinear points, and all middle interior points are connected to two points at height $0$.  Thus for this triangulation, $d=e=0$, so $\dim(\mathbb{M}_P)\geq \dim(\mathbb{M}_\mathcal{T})=2g-1$.  Since $\dim(\mathbb{M}_P)\leq 2g-1$ for any hyperelliptic polygon of genus $g$, we conclude that $\dim(\mathbb{M}_P)=2g-1$, as claimed.

\begin{figure}[hbt]
    \centering
    \includegraphics[scale = .2]{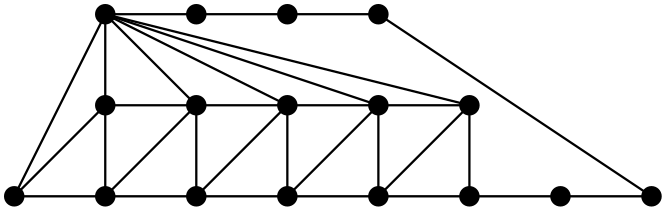}
    \caption{The start of a triangulation of a polygon in Class 1}
    \label{figure:class_1_triangulation}
\end{figure}

Now we assume that $P$ is in Class 2 with condition (a) satisfied;  that is, we assume $0\leq i\leq g$ and $0\leq j\leq i$.  We wish to show that the moduli dimension is $\min(g+i+j,2g-1)$. Certainly we cannot do better than $2g-1$.  Let us argue that we cannot do better than $g+i+j$.  First we remark that in any unimodular triangulation, we connect $(1,1)$ to $(1,2)$ and $(0,0)$, and we connect $(g,1)$ to $(g+1,1)$, $(1+j,2)$, and $(9,0)$.  Thus $(g,1)$ does not contribute to $e$; $(1,1)$ does not contribute to $e$ if and only if it is connected at least two points at height $0$ or $2$; and a middle interior point does not contribute to $d$ if and only if it is connected to at least two points at height $0$ or $2$.  Each time one of the relevant $g-1$ interior points is connected to at least two points at height $0$ or $2$, that uses up one of the edges on the horizontal faces of $P$. There are $i+j$ of these, so at most $\min\{g-1,i+j\}$ of them fail to contribute to $d+e$.  Conversely, at least $g-1-\min\{g-1,i+j\}=\min\{0,g-i-j-1\}$ contribute to $d+e$.  Thus for any triangulation $\mathcal{T}$ we have $\dim(\mathbb{M}_\mathcal{T})=2g-1-e-d\leq 2g-1-\min\{0,g-i-j-1\}=\min\{2g-1,g+i+j\}$.  Note that there does exist a triangulation $\mathcal{T}$ achieving this upper bound: for $1\leq \ell\leq i$, connect the lattice point $(\ell,1)$ to $(\ell-1,0)$ and $(\ell,1)$; and for $i+1\leq\ell\leq \min\{g-1,j+1\}$ connect $(\ell,1)$ to $(\ell-i,2)$.  We conclude that $\dim(\mathbb{M}_P)=\min\{2g-1,g+i+j\}$.  Such a triangulation is illustrated in Figure \ref{figure:class_2a_triangulation}.

\begin{figure}[hbt]
    \centering
    \includegraphics[scale = 1]{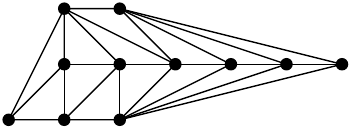}
    \caption{The start of a triangulation of a polygon in Class 2(a)}
    \label{figure:class_2a_triangulation}
\end{figure}

    Finally, we move on to Class 3. In case (a), we have $0 \leq k \leq g+1$ and $0 \leq i \leq g+1−k$ and $0 \leq j \leq i$. If $k \neq 0$, the rightmost and leftmost endpoint interior points never contribute to $e$, so the minimum we can achieve is $g+1$. By the same argument as Class 2(a), we end up getting the maximum possible dimension as $\min(g+1+i+j,2g-1)$. When $k = 0$, the leftmost endpoint interior point can contribute to $e$ like Class 2(a) so we end up with moduli dimension $\min(g+i+j,2g-1)$. For Class 3(b), we wish to show that the moduli dimension is $\min(g+1+i+j,2g-1)$. First, note that the leftmost endpoint interior can contribute to $e$ if and only if $k = 0$. However, when this happens, $i > g+1$, giving us full dimension of $2g-1$. Note that $g + i + j + 1 > 2g-1$ in this case. The rightmost endpoint can contribute to $e$ if and only if $(k+j,2), (g+1,1), (i,0)$ form a line. If they are vertical or the line slopes downward, again $i \geq g+1$ and we have full dimension of $2g-1$. The only other case is if the line slopes upward or $k+j > g+1 >i$. Note that $i +k > g+1$, so $2g-i -2k + 1 < g - k$. Thus, $j < g - k$ and $j+k < g$. Thus, such a line is impossible. With both endpoints not contributing to a loss of degrees of freedom, this now follows the same logic as when $k \neq 0$ in Class 3(a). 
\end{proof}

We now have an easy proof of Theorem \ref{theorem:hyperelliptic}.

\begin{proof}[Proof of Theorem \ref{theorem:hyperelliptic}]
By Proposition \ref{prop:all_hyp_dim} and the fact that $i,j\geq 0$, we have that $g\leq \dim(\mathbb{M}_P)\leq 2g-1$  for any hyperelliptic polygon $P$ of genus $g$.  Conversely, let   $g\leq d\leq 2g-1$.  There does indeed exist a hyperelliptic polygon of genus $g$ with $\dim(\mathbb{M}_P)=d$; for instance, choose $P$ to be in Class 2(a) with $j=0$ and $i=d-g$.
\end{proof}

To our knowledge, there is no known result determining the dimensions of the algebraic moduli spaces \(\mathcal{M}_P\) where \(P\) is a general hyperelliptic polygon, so we cannot compare our formulas to the algebraic case.  An interesting project would be to determine such algebraic dimensions, and to compare them to our tropical ones.

\bibliographystyle{plain}

\begin{thebibliography}{10}

\bibitem{bjms}
Sarah Brodsky, Michael Joswig, Ralph Morrison, and Bernd Sturmfels.
\newblock Moduli of tropical plane curves.
\newblock {\em Res. Math. Sci.}, 2:Art. 4, 31, 2015.

\bibitem{movingout}
Wouter Castryck.
\newblock Moving out the edges of a lattice polygon.
\newblock {\em Discrete Comput. Geom.}, 47(3):496--518, 2012.

\bibitem{cv}
Wouter Castryck and John Voight.
\newblock On nondegeneracy of curves.
\newblock {\em Algebra Number Theory}, 3(3):255--281, 2009.

\bibitem{small2017dimensions}
Desmond {Coles}, Neelav {Dutta}, Sifan {Jiang}, Ralph {Morrison}, and Andrew
  {Scharf}.
\newblock {The moduli space of tropical curves with fixed Newton polygon}.
\newblock {\em arXiv e-prints}, page arXiv:2002.10874, February 2020.

\bibitem{triangulations}
Jes\'{u}s~A. De~Loera, J\"{o}rg Rambau, and Francisco Santos.
\newblock {\em Triangulations}, volume~25 of {\em Algorithms and Computation in
  Mathematics}.
\newblock Springer-Verlag, Berlin, 2010.
\newblock Structures for algorithms and applications.

\bibitem{counting_lattice_triangulations}
Volker Kaibel and G\"{u}nter~M. Ziegler.
\newblock Counting lattice triangulations.
\newblock In {\em Surveys in combinatorics, 2003 ({B}angor)}, volume 307 of
  {\em London Math. Soc. Lecture Note Ser.}, pages 277--307. Cambridge Univ.
  Press, Cambridge, 2003.

\bibitem{Koelman}
R.~Koelman.
\newblock {\em The number of moduli of families of curves on a toric surface}.
\newblock PhD thesis, Katholieke Universiteit de Nijmegen, 1991.

\bibitem{ms}
Diane Maclagan and Bernd Sturmfels.
\newblock {\em Introduction to tropical geometry}, volume 161 of {\em Graduate
  Studies in Mathematics}.
\newblock American Mathematical Society, Providence, RI, 2015.

\bibitem{morrison-hyperelliptic}
Ralph Morrison.
\newblock Tropical hyperelliptic curves in the plane.
\newblock {\em Journal of Algebraic Combinatorics}, Apr 2020.

\bibitem{panoptigons}
Ralph Morrison and Ayush~Kumar Tewari.
\newblock Convex lattice polygons with all lattice points visible.
\newblock {\em Discrete Mathematics}, 344(1):112161, 2021.

\bibitem{morrison_thesis}
Ralph~Elliott Morrison.
\newblock {\em Tropical and non-{A}rchimedean curves}.
\newblock ProQuest LLC, Ann Arbor, MI, 2015.
\newblock Thesis (Ph.D.)--University of California, Berkeley.

\bibitem{scott}
P.~R. Scott.
\newblock On convex lattice polygons.
\newblock {\em Bull. Austral. Math. Soc.}, 15(3):395--399, 1976.

\end{thebibliography}

\end{document}